\journalname{JOTA}
\renewcommand{\epsilon}{\varepsilon}
\newcommand{\R}{\mathcal{R}}
\newcommand{\tR}{\tilde{\mathcal{R}}}
\newcommand{\Rb}{\mathbb{R}}
\newcommand{\nR}{\nabla \mathcal{R}}
\newcommand{\critic}{\mathcal{C}}
\newcommand{\statio}{\mathcal{E}}
\newcommand{\image}{\R\left(\statio\cap K\right)}
\newtheorem{proofpart}{Part}
\begin{document}

\title{Complexities of Armijo-like algorithms in Deep Learning context}

\subtitle{}

\author{Bensaid Bilel}

\institute{Bensaid Bilel \at
             Université de Bordeaux \\
              351, cours de la Libération - F 33 405 TALENCE \\
              bilel.bensaid30@gmail.com
}

\date{Received: date / Accepted: date}

\maketitle

\begin{abstract}
The classical Armijo backtracking algorithm \cite{armijo} achieves the optimal complexity for smooth functions like gradient descent but without any hyperparameter tuning. However, the smoothness assumption is not suitable for Deep Learning optimization. In this work, we show that some variants of the Armijo optimizer achieves acceleration and optimal complexities under assumptions more suited for Deep Learning: the $(L_0,L_1)$ smoothness condition \cite{gd_clipping_accelerates} and analyticity \cite{Bolte_KL}. New dependences on the smoothness constants and the initial gap are established. The results theoretically highlight the powerful efficiency of Armijo-like conditions for highly non-convex problems.   
\end{abstract}
\keywords{Armijo-like conditions \and Complexity \and Generalized smoothness \and Analytic functions}
\subclass{65K10\and  90C26 \and 65Y20 \and 26E05}


\section{Introduction}

These recent years, machine learning models and more particularly Deep Learning (DL) ones (ex: neural networks) have become state of the art in different fields like computer vision \cite{image_recognition}, language recognition \cite{language_recognition} or physical simulations \cite{plasma,KluthRipoll,LamyFeugeas}. They involve solving highly non-convex optimization problems:
\begin{equation*}
	\displaystyle{\min_{\theta \in \Rb^N}} \R(\theta),
\end{equation*}
where $N \gg 1$ is the dimension of the problem (ex: number of parameters of a neural network) and $\R: \Rb^N \mapsto \Rb$ is a lower bounded differentiable function.
However, the task of minimizing $\R$ is extremely difficult due to the non-convexity of the objective function \cite{DL_opti}.
The research for the global minimum and even the distinction between a local minimum and a saddle point are NP-hard problems
\cite{NP_local_minima}. Moreover, the number of local minimums may increase exponentially with the dimension \cite{expMin}. Therefore, in practice,  practioners look for critical points of $\R$:
\begin{equation*}
	\critic = \{\theta \in \Rb^N, \nR(\theta)=0\}.
	\label{critical_points}
\end{equation*}
When $N$ is big and for complexity reasons, it is commonly accepted to use algorithms of order at most one, in other words, optimizers that need only evaluation of the function and its gradient.

\paragraph{First-order algorithms}
~~\\
A very important number of first-order optimizers have been suggested these recent years to explore the set $\critic$.\\
Schematically, they can be classified in four families:
\begin{itemize}
	\item descent algorithms which move the parameter with a step $\eta>0$ in a research direction $d_n$ such that $d_n \cdot \nR(\theta_n)<0$:
	\begin{equation*}
		\theta_{n+1} = \theta_n - \eta d_n.
	\end{equation*}
	Gradient descent (GD) that is the most famous optimizer of this family corresponds to the case $d_n=-\nR(\theta_n)$. 
	One of the main problem of GD is that one needs the knowledge of the lipschitz constant of the gradient $L$ (smoothness constant) to make it converge. In the litterature, there exists many ways to ajust the learning rate of GD: the Armijo conditions (with the backtracking procedure) \cite{armijo,book_nocedal}, the Wolfe conditions \cite{wolfe1,wolfe2} and the Polyak step size \cite{Boyd_polyak_stepsize,step_polyak_stochastic,sgd_polyak_step_size}. Contrary to the methods presented below, the Armijo/Wolfe methods are not very popular in DL due to the function evaluations needed to compute the step size. However, the results of this paper show that these methods exhibit attractive properties for DL optimization. 
	\item Inertial algorithms suggested by Polyak \cite{Polyak}. The idea consists in using previous gradient evaluations to update the parameters. Contrary to descent algorithms, these methods are optimal for (strongly) convex functions \cite{Polyak,Nesterov}.
	\item Adaptive gradients to adjust automatically the learning rates. In fact, GD is highly sensitive to the choice of the time step $\eta$: it needs an important tuning to get correct performances and to converge. The adaptive gradient methods use the previous gradient evaluations to adjust the time step: see RMSProp \cite{RMSProp}, AdaDelta \cite{Adadelta}, Adam \cite{Adam}, AdaGrad \cite{Adagrad}, RAdam \cite{RAdam} and AMSGrad \cite{AMSGrad}. 
	\item Normalized gradients that aim to reduce the sensitivity to $\eta$ like the previous family. They enforce the time step to be inversely proportional to a power of the gradient: clipping GD/normalized GD \cite{RNN_difficult,gd_clipping_accelerates} and p-GD/rescaled GD \cite{pGD,cv_pflow} are part of this family. They are very popular to train RNN/LSTM network \cite{RNN_difficult} where vanishing gradient and exploding gradient appear \cite{explode1,explode2,explode3}.
\end{itemize}

\paragraph{Classical complexity bounds}
~~\\
The convergence guarantees of all these algorithms have been widely studied under the global $L$-smoothness condition ($\nR$ is $L$-lipschitz continuous on $\Rb^N$) and the Kurdyka-Lojasiewicz inequality \cite{Loj1,Loj2,Kurdyka}. To have an idea of the efficiency of these methods, one common way is to focus on their complexities: given a threshold $\epsilon>0$, we are interested in the number of iterations $n$ needed to satisfy $\|\nR(\theta_n)\| \leq \epsilon$.\\
Let us recall some basic results concerning complexities under classical assumptions in the optimization litterature:
\begin{itemize}
	\item For a convex function, the optimal complexity is given by $\mathcal{O}\left(\epsilon^{-1}\right)$ and is achieved by the Nesterov optimizer \cite{Nesterov_book,Nesterov_amelioration}.
	\item For $L$-smooth function, the optimal complexity is $\mathcal{O}\left(L\epsilon^{-2}\right)$ and GD achieves it \cite{bound_smooth1}.
	\item For a function that admits a $L$-lipschitz gradient and a $\rho$-lipschitz hessian, the optimal complexity is given by $\mathcal{O}\left(\epsilon^{-7/4}\right)$. Many optimizers have been built to achieve this speed \cite{AGD_product_hessian,AGD_guilty_convex,AGD_no_log,parameterfree_AGD,parameterfree_AGD_Holder}.
\end{itemize}
The major limitation of these studies is that the smoothness assumption is not satisfied by DL models \cite{compute_bound_L}. Before presenting some assumptions on the objective function more suited for DL optimization, let us introduce some classical notations in complexity analysis. For an initial point $\theta_0 \in \Rb^N$, let us introduce the following notations:
\begin{equation*}
	\mathcal{S}\coloneqq\{\theta \in \Rb^N, \R(\theta)\leq \R(\theta_0)\},
\end{equation*}
\begin{equation*}
	M\coloneqq\sup_{\theta \in \mathcal{S}} \|\nR(\theta)\|,
\end{equation*}
\begin{equation*}
	\R^* = \inf_{\theta \in \Rb^N} \R(\theta),
\end{equation*}
\begin{equation*}
	\Delta = \R(\theta_0)-\R^*.
\end{equation*}

\paragraph{The generalized smoothness assumption}
~~\\
A relevant hypothesis introduced to analyse deep neural networks and particularly large language models is the $(L_0,L_1)$ smoothness \cite{gd_clipping_accelerates}:
\begin{definition}
	A second order differentiable function $\R$ is $(L_0,L_1)$ smooth if:
	\begin{equation}
		\forall \theta \in \Rb^N, \|\nabla^2 \R(\theta)\| \leq L_0+L_1\|\nR(\theta)\|.
		\label{L0_L1} 
	\end{equation}
\end{definition}
\begin{remark}
	We can assume without loss of generality that $L_1 \neq 0$ since if $L_1=0$, we get back to the classical smoothness assumption.
\end{remark}

This assumption was introduced to explain the superiority of Adam-like algorithms and clipping GD on large language models. In \cite{gd_clipping_accelerates}, this hypothesis is experimentally checked on a family of language tasks. Let us explain why clipping GD is superior to GD under this setting. The authors of \cite{gd_clipping_accelerates} establish a "deceleration" result about GD through the computation of a lower and upper bound for the complexity:
\begin{theorem}[\cite{gd_clipping_accelerates}]
	For the class of functions $(L_0,L_1)$ smooth such that $L_0\geq 1$, $L_1\geq 1$ and $M>1$, the complexity of GD is lower bounded by:
	\begin{equation*}
		\dfrac{L_1M\left(\Delta-5\epsilon/8\right)}{8\epsilon^2\left(\log{M}+1\right)}.
	\end{equation*}
	Let us assume that $M<+\infty$ and let us take $\eta=\dfrac{1}{2(ML_1+L_0)}$. The complexity of GD is upper bounded by:
	\begin{equation*}
		4\left(ML_1+L_0\right)\Delta \epsilon^{-2}.
	\end{equation*}
	\label{GD_L0_L1}
\end{theorem}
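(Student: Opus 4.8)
The statement splits into two unrelated halves, and I would prove them separately: the upper bound is a descent-lemma argument adapted to $(L_0,L_1)$-smoothness, and the lower bound requires constructing an explicit hard instance.

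\textbf{Upper bound.} The plan is to show that GD with $\eta=\frac{1}{2(ML_1+L_0)}$ keeps every iterate \emph{and} every segment $[\theta_n,\theta_{n+1}]$ inside $\mathcal{S}$, so that along the whole trajectory $\|\nnR\|\le L_0+L_1M=:\tilde L$ (using $M=\sup_{\mathcal{S}}\|\nR\|$) and the classical descent lemma applies with effective constant $\tilde L$. Concretely, set $\phi_n(t)=\R(\theta_n-t\eta\nR(\theta_n))$ for $t\in[0,1]$ and induct on $n$: assuming $\theta_n\in\mathcal{S}$, a bootstrap/continuity argument gives $\phi_n(t)\le\phi_n(0)$ on all of $[0,1]$ — otherwise, at the first crossing time $t^\star$ the arc $\{\theta_n-t\eta\nR(\theta_n):t\le t^\star\}$ still lies in $\mathcal{S}$, hence $\|\nnR\|\le\tilde L$ there, and the second-order Taylor estimate $\phi_n(t)\le\phi_n(0)-\eta\|\nR(\theta_n)\|^2\,t\bigl(1-\tilde L\eta t/2\bigr)\le\phi_n(0)$ (valid for $t\in[0,1]$ since $\tilde L\eta/2=1/4$) contradicts the crossing. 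Thus $\theta_{n+1}\in\mathcal{S}$ and, at $t=1$, $\R(\theta_n)-\R(\theta_{n+1})\ge\eta(1-\tilde L\eta/2)\|\nR(\theta_n)\|^2\ge\frac{\eta}{2}\|\nR(\theta_n)\|^2=\frac{1}{4\tilde L}\|\nR(\theta_n)\|^2$. Telescoping against $\Delta$ and using $\|\nR(\theta_k)\|>\epsilon$ for every $k$ before the stopping iteration yields the claimed bound $4(ML_1+L_0)\Delta\epsilon^{-2}$. The only delicate point is the bootstrap step, needed precisely because $(L_0,L_1)$-smoothness bounds the Hessian only through the a priori uncontrolled gradient.

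\textbf{Lower bound.} Here I would exhibit an explicit one-dimensional instance $(\R,\theta_0)$ (trivially lifted to $\Rb^N$) and run GD with an arbitrary fixed step size $\eta$. The function would be glued in a $C^2$ way from two pieces: a \emph{stiff} piece modelled on the canonical tight example $x\mapsto\frac{L_0}{L_1^2}\bigl(e^{L_1x}-L_1x-1\bigr)$, for which $\nnR=L_0+L_1\nR$ holds with equality and the gradient climbs from $\mathcal{O}(\epsilon)$ up to $M$ over a parameter window of width only $\Theta(L_1^{-1}\log M)$ while the objective drops by $\Theta(M)$; and a long \emph{shallow} piece on which $\nR$ is essentially the constant $\approx\epsilon$ over a length $D$ with $\epsilon D\approx\Delta$, where $(L_0,L_1)$-smoothness holds for free. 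One checks $L_0,L_1\ge1$, $M>1$, computes $\Delta$, and confirms $M=\sup_{\mathcal{S}}\|\nR\|$. The heart of the argument is a dichotomy on $\eta$: if $\eta\gtrsim L_1^{-1}M^{-1}\log M$, a single GD step from inside the stiff region overshoots clear across the minimum to a strictly higher value and GD diverges (or at least never reaches $\|\nR\|\le\epsilon$ without leaving $\mathcal{S}$), so its complexity is $+\infty$; if $\eta\lesssim L_1^{-1}M^{-1}\log M$, then while traversing the shallow piece each step decreases $\R$ by at most $\eta\|\nR(\theta_k)\|^2\lesssim\eta\epsilon^2$ with $\|\nR(\theta_k)\|>\epsilon$ throughout, so closing the $\Theta(\Delta)$ gap costs at least $\asymp\Delta/(\eta\epsilon^2)\gtrsim L_1M\Delta\epsilon^{-2}/\log M$ iterations; tracking constants gives exactly $\frac{L_1M(\Delta-5\epsilon/8)}{8\epsilon^2(\log M+1)}$, the $-5\epsilon/8$ and the ``$+1$'' absorbing the boundary contributions of the two pieces.

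\textbf{Main obstacle.} The upper bound should be essentially routine once the bootstrap is in place. The real work is the lower bound: designing the two pieces so that $L_0,L_1\ge1$, $M>1$, the prescribed value of $\Delta$, the $C^2$ gluing, and the tight Hessian identity all hold simultaneously, and making the large-$\eta$ branch of the dichotomy fully rigorous — one must rule out that an overshooting GD eventually lands, after a few wild steps, in a small-gradient region. Pinning down exactly where the $\log M$ threshold sits, i.e. comparing the reach $\eta M$ of one step to the $\Theta(L_1^{-1}\log M)$ width of the stiff region, is the crux of the whole estimate.
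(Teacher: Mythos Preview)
This theorem is not proved in the present paper; it is quoted from \cite{gd_clipping_accelerates} as background, and the paper supplies no argument of its own. There is therefore nothing here to compare your proposal against.

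On the merits, your plan is sound and is essentially the approach of the original source. For the upper bound, the bootstrap confining the whole segment $[\theta_n,\theta_{n+1}]$ to $\mathcal{S}$ is the right device: once that holds, $(L_0,L_1)$-smoothness reduces to ordinary $\tilde L$-smoothness with $\tilde L=L_0+L_1M$ along the trajectory, and the standard descent-lemma telescoping gives the $4\tilde L\Delta\epsilon^{-2}$ bound directly. For the lower bound, the two-region construction --- an exponential stiff piece on which the inequality $\|\nnR\|\le L_0+L_1\|\nR\|$ is saturated, glued to a long shallow piece with near-constant gradient $\approx\epsilon$ --- together with the step-size dichotomy on $\eta$ is precisely the mechanism used in \cite{gd_clipping_accelerates}. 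Your identification of the delicate point, namely ensuring that the large-$\eta$ branch genuinely fails rather than accidentally landing in a small-gradient zone after a wild step, is accurate; in the original this is handled by arranging the geometry so that an overshoot produces a configuration no better than the starting one.
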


On the one hand, this theorem highlights the fact that GD is highly sensitive to the initial condition both through $\Delta$ and $M$. The constant $M$ may be really huge for neural networks \cite{gd_clipping_accelerates,compute_bound_L} explaining the inefficiency of GD to train LSTM. On the other hand, clipping GD does not suffer from this dependence on $M$ (let us denote by $\gamma$ the clipping threshold, see \cite{gd_clipping_accelerates}):
\begin{theorem}[\cite{gd_clipping_accelerates}]
	Let us take $\eta = \frac{1}{10L_0}$ and $\gamma = \min\left(\frac{1}{\eta}, \frac{1}{10L_1\eta}\right)$. The complexity of clipping GD is upper bounded by:
	\begin{equation*}
		\dfrac{20L_0\Delta}{\epsilon^2}+\dfrac{20\max\{1,L_1^2\}\Delta}{L_0}.
	\end{equation*}
	\label{GD_clipping1}
\end{theorem}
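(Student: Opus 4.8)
The plan is to adapt the textbook descent lemma to the $(L_0,L_1)$ setting and then to split the iterations according to whether the clipping is active. Write the clipped iteration as $\theta_{n+1}=\theta_n-h_n\nR(\theta_n)$ with $h_n=\eta$ when $\|\nR(\theta_n)\|\le\gamma$ (clipping inactive) and $h_n=\eta\gamma/\|\nR(\theta_n)\|$ when $\|\nR(\theta_n)\|>\gamma$ (clipping active); in both cases the displacement is $\|\theta_{n+1}-\theta_n\|=\min\bigl(\eta\|\nR(\theta_n)\|,\eta\gamma\bigr)$, and the prescribed values $\eta=1/(10L_0)$ and $\gamma=\min(1/\eta,1/(10L_1\eta))$ are calibrated so that this displacement never exceeds $1/L_1$ and, in addition, $L_1\eta\gamma\le 1/10$.

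The first step is a localization estimate: for any $\theta$ with $\|\theta-\theta_n\|\le 1/L_1$, a Gr\"onwall argument on $s\mapsto\|\nR(\theta_n+s(\theta-\theta_n))\|$, fed by $\|\nnR\|\le L_0+L_1\|\nR\|$ from \eqref{L0_L1}, gives $\|\nR(\theta)\|\le e\|\nR(\theta_n)\|+(e-1)L_0/L_1$, and hence $\sup_{[\theta_n,\theta_{n+1}]}\|\nnR\|\le e\bigl(L_0+L_1\|\nR(\theta_n)\|\bigr)$. Combining this with a second-order Taylor expansion with integral remainder between $\theta_n$ and $\theta_{n+1}$ and with $\theta_{n+1}-\theta_n=-h_n\nR(\theta_n)$ produces
\begin{equation*}
	\R(\theta_{n+1})\le\R(\theta_n)-h_n\|\nR(\theta_n)\|^2+\frac{e}{2}\bigl(L_0+L_1\|\nR(\theta_n)\|\bigr)h_n^2\|\nR(\theta_n)\|^2 .
\end{equation*}
I would then examine the two regimes. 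When the clipping is inactive, $h_n=\eta$ and $L_1\|\nR(\theta_n)\|\le L_1\gamma\le L_0$, so the bracket is at most $2L_0$; since $e\eta L_0=e/10<1$ this is the familiar $L$-smooth decrease and yields $\R(\theta_{n+1})\le\R(\theta_n)-\|\nR(\theta_n)\|^2/(20L_0)$. When the clipping is active, $h_n\|\nR(\theta_n)\|=\eta\gamma$, so the last term of the display equals $\tfrac{e}{2}(\eta\gamma)^2\bigl(L_0+L_1\|\nR(\theta_n)\|\bigr)$; because $L_1\eta\gamma\le 1/10$ it is only a small fraction of the linear descent term $\eta\gamma\|\nR(\theta_n)\|$, up to a residual of size $O\bigl(L_0(\eta\gamma)^2\bigr)$ which is absorbed using $\|\nR(\theta_n)\|>\gamma=10L_0\,\eta\gamma$. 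One obtains $\R(\theta_{n+1})\le\R(\theta_n)-c\,\eta\gamma\,\|\nR(\theta_n)\|\le\R(\theta_n)-c\,\eta\gamma^2$ for an absolute constant $c>0$.

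Finally I would sum. If $\|\nR(\theta_k)\|>\epsilon$ for all $k<n$, the total decrease of $\R$ over these iterations is at most $\Delta$, with each inactive step contributing at least $\epsilon^2/(20L_0)$ and each active step at least $c\,\eta\gamma^2$; hence there are $\mathcal{O}\bigl(L_0\Delta\epsilon^{-2}\bigr)$ inactive steps and $\mathcal{O}\bigl(\Delta/(\eta\gamma^2)\bigr)$ active ones. Since $\eta\gamma^2=\tfrac{1}{10L_0}\min(10L_0,L_0/L_1)^2=\tfrac{L_0}{10\max(1/10,L_1)^2}\ge\tfrac{L_0}{10\max\{1,L_1^2\}}$, the number of active steps is $\mathcal{O}\bigl(\max\{1,L_1^2\}\Delta/L_0\bigr)$, which is $\epsilon$-free; adding the two contributions and tightening the numerical constants (for which the calibration $\eta=1/(10L_0)$, $\gamma=\min(1/\eta,1/(10L_1\eta))$ is precisely designed) gives the announced bound $\frac{20L_0\Delta}{\epsilon^2}+\frac{20\max\{1,L_1^2\}\Delta}{L_0}$. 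I expect the localization estimate to be the crux: one must check that the prescribed $\eta$ and $\gamma$ keep the step below $1/L_1$ and $L_1\eta\gamma$ below $1/10$ in both sub-cases $L_1\ge 1/10$ and $L_1<1/10$, since it is exactly this localization that lets the Hessian along the step be controlled by the gradient at the \emph{current} iterate --- the mechanism that removes the $M$-dependence of Theorem~\ref{GD_L0_L1} (yielding the $\epsilon$-free second term) while introducing the $\max\{1,L_1^2\}$ factor.
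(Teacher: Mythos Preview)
The paper does not supply its own proof of this statement: Theorem~\ref{GD_clipping1} is quoted verbatim from \cite{gd_clipping_accelerates} as background motivation, and no argument for it appears anywhere in the text. There is therefore nothing in the paper to compare your proposal against.

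That said, your sketch is essentially the argument of the original reference \cite{gd_clipping_accelerates}: localize the Hessian bound on the segment $[\theta_n,\theta_{n+1}]$ via a Gr\"onwall-type estimate (valid because the step length stays below $1/L_1$), deduce a descent inequality with effective smoothness $O(L_0+L_1\|\nR(\theta_n)\|)$, split into the unclipped regime (where $L_1\|\nR(\theta_n)\|\le L_1\gamma\le L_0$ so the descent looks $L_0$-smooth and each step loses at least $\epsilon^2/(20L_0)$) and the clipped regime (where each step loses at least a constant times $\eta\gamma^2$, independent of $\epsilon$), then sum. Your check that the calibration $\eta=1/(10L_0)$, $\gamma=\min(1/\eta,1/(10L_1\eta))$ forces both $\|\theta_{n+1}-\theta_n\|\le 1/L_1$ and $L_1\eta\gamma\le 1/10$ in each sub-case is the right verification to perform, and your computation $\eta\gamma^2\ge L_0/(10\max\{1,L_1^2\})$ gives the advertised second term. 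The outline is sound; only the explicit tracking of the constant $c$ in the clipped case (to recover the factor $20$) is left implicit.
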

It is explained in \cite{gd_clipping_improved} that the lower bound for first order algorithms on this class of functions is of order $\Delta L_0 \epsilon^{-2}$. In \cite{gd_clipping_improved}, the authors prove that clipping GD can almost achieve this speed if $\eta$ and $\gamma$ are well-chosen (same result for Momentum clipping).

\begin{theorem}[\cite{gd_clipping_improved}]
	Let us assume that $\R$ is $(L_0,L_1)$ smooth and let $\epsilon>0$. Let us take $\gamma = \frac{1}{10AL_0}$ and $\eta = \frac{1}{10AL_1}$ where $A=1.06$. Then, we have:
	\begin{equation*}
		\frac{1}{n} \sum_{k=1}^n \|\nR(\theta_k)\| \leq 2\epsilon,
	\end{equation*}
	if $n$ satisfies:
	\begin{equation*}
		n \geq 30\Delta \max\left(\frac{AL_0}{\epsilon^2},25A\frac{L_1^2}{L_0}\right).
	\end{equation*}
	\label{GD_clipping2}
\end{theorem}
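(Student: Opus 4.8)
The plan is to run the standard descent-lemma argument for clipped gradient descent, but carefully tracking the two regimes dictated by the clipping threshold $\gamma$. Recall the clipped update $\theta_{k+1} = \theta_k - \min\{\eta,\gamma/\|\nR(\theta_k)\|\}\nR(\theta_k)$; write $\eta_k = \min\{\eta,\gamma/\|\nR(\theta_k)\|\}$ for the effective step. The first step is to establish a local descent inequality: for any two points $\theta,\theta'$ on the segment along which $\R$ stays $(L_0,L_1)$ smooth, a Taylor expansion with the bound $\|\nabla^2\R\| \le L_0 + L_1\|\nR\|$ gives $\R(\theta') \le \R(\theta) + \nR(\theta)\cdot(\theta'-\theta) + \tfrac{1}{2}(L_0 + L_1 G)\|\theta'-\theta\|^2$, where $G$ is an upper bound for $\|\nR\|$ on that segment. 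Since $\|\theta_{k+1}-\theta_k\| = \eta_k\|\nR(\theta_k)\| \le \gamma = \tfrac{1}{10AL_0}$, one first shows (a short bootstrap/continuity argument in the style of \cite{gd_clipping_improved}) that $\|\nR\|$ cannot more than (roughly) double along the step, so one may take $G \lesssim 2\|\nR(\theta_k)\|$, hence $L_0 + L_1 G \lesssim L_0 + 2L_1\|\nR(\theta_k)\|$.

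The second step is the case split. When $\|\nR(\theta_k)\| \le \gamma/\eta = L_1/L_0$ (equivalently $\gamma/\|\nR(\theta_k)\| \ge \eta$), the step is the plain GD step $\eta_k = \eta = \tfrac{1}{10AL_1}$; plugging into the descent inequality and using $L_1\|\nR(\theta_k)\| \le L_1\cdot(L_1/L_0)$... actually in this regime one uses instead that $L_0$ dominates, giving $\R(\theta_{k+1}) \le \R(\theta_k) - c\,\eta\|\nR(\theta_k)\|^2$ with an absolute constant $c$ once $A=1.06$ is used to absorb the $1/(2\cdot 10A)$-type factors; this contributes the $L_1/L_0$ term. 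When $\|\nR(\theta_k)\| > L_1/L_0$, the step is $\eta_k = \gamma/\|\nR(\theta_k)\|$, the displacement has norm exactly $\gamma$, and the descent inequality becomes $\R(\theta_{k+1}) \le \R(\theta_k) - \gamma\|\nR(\theta_k)\| + \tfrac{1}{2}(L_0 + 2L_1\|\nR(\theta_k)\|)\gamma^2$; since $\gamma = \tfrac{1}{10AL_0}$ and $L_1\gamma \le \tfrac{1}{10A}\cdot\tfrac{L_1}{L_0} < \|\nR(\theta_k)\|\cdot\tfrac{1}{10A}$... more precisely $2L_1\|\nR(\theta_k)\|\gamma^2 = \tfrac{2L_1\gamma}{10AL_0}\|\nR(\theta_k)\| \le \tfrac{1}{5A}\gamma\|\nR(\theta_k)\|$ when $L_1/L_0 < \|\nR(\theta_k)\|$ fails — one has to be slightly more careful, using $L_1\gamma^2\|\nR(\theta_k)\| \le \gamma\|\nR(\theta_k)\|\cdot L_1\gamma$ and $L_1\gamma = \tfrac{L_1}{10AL_0}$, which is small only relative to... here one instead keeps both terms and gets $\R(\theta_{k+1}) \le \R(\theta_k) - \tfrac{1}{2}\gamma\|\nR(\theta_k)\| + \tfrac{1}{2}L_0\gamma^2$, yielding a linear-in-$\|\nR\|$ decrease. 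The constant choice $A=1.06$ is exactly what makes the leftover coefficient positive and the bookkeeping close.

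The third step is to telescope. Summing the per-step decrease over $k=1,\dots,n$ and using $\R(\theta_{n+1}) \ge \R^*$, one gets $\Delta \ge \sum_k (\text{decrease})$. Splitting the sum over the two regimes, in the "large gradient" regime each term is $\gtrsim \gamma\|\nR(\theta_k)\| = \tfrac{1}{10AL_0}\|\nR(\theta_k)\|$, and in the "small gradient" regime each term is $\gtrsim \eta\|\nR(\theta_k)\|^2 \ge \eta\cdot\tfrac{L_1}{L_0}\cdot\|\nR(\theta_k)\| = \tfrac{1}{10AL_1}\cdot\tfrac{L_1}{L_0}\|\nR(\theta_k)\|$ wait — in that regime $\|\nR(\theta_k)\|$ is small, so one instead lower-bounds differently depending on whether $\|\nR(\theta_k)\| \ge \epsilon$. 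Assuming toward contradiction that $\tfrac1n\sum_k\|\nR(\theta_k)\| > 2\epsilon$, one shows $\sum_k(\text{decrease}) > $ the claimed lower bound on $n$ times the appropriate constant, contradicting $\Delta \ge \sum_k(\text{decrease})$; unwinding gives $n \le 30\Delta\max(AL_0/\epsilon^2, 25AL_1^2/L_0)$ as a necessary condition for the running average to exceed $2\epsilon$, which is the contrapositive of the statement.

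The main obstacle I anticipate is the bootstrap controlling the variation of $\|\nR\|$ along a single step (ensuring $(L_0,L_1)$ smoothness can legitimately be applied on the segment and that $G \approx 2\|\nR(\theta_k)\|$ is valid): this requires a Grönwall-type estimate on $t\mapsto\|\nR(\theta_k + t(\theta_{k+1}-\theta_k))\|$ using $\|\tfrac{d}{dt}\nR\| \le (L_0+L_1\|\nR\|)\|\theta_{k+1}-\theta_k\|$ and the fact that the displacement is capped at $\gamma = \tfrac{1}{10AL_0}$, so that the exponential factor $e^{L_1\gamma}$ is at most $e^{1/(10A L_0\cdot)}$... it is this interplay that forces the precise numerical constants, and getting all of them to line up to produce exactly the clean bound with $A=1.06$ is the delicate part. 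The rest is routine descent-lemma and telescoping algebra.
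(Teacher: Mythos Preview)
This theorem is not proved in the paper at all: it is a cited result from \cite{gd_clipping_improved}, stated in the introduction purely as background to motivate the comparison with Armijo-type methods. There is therefore no ``paper's own proof'' against which to compare your proposal. Your sketch follows the standard route one would expect for such a clipped-GD complexity bound (local descent lemma under $(L_0,L_1)$ smoothness with a Gr\"onwall-type control of the gradient along the step, case split on whether the clip is active, telescoping), and that is indeed the structure of the argument in the original reference; but since the present paper simply quotes the statement, there is nothing here to match or contrast.
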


Similar results (often less explicit in terms of the constants) are derived for inertial, Adam-like optimizers and sign GD both in the deterministic and the stochastic setting \cite{Adam_generalized_smoothness,adagrad_generalized_smoothness,IG_lipschitz_generalized,adagrad_generalized_smoothness_affine,peter_smoothness}. Some generalizations of $(L_0,L_1)$ smoothness can be find in \cite{signSGD_generalized_smoothness,adam_rho_smooth,l_smoothness}.

\paragraph{Definable and analytic functions}
~~\\
Another class of functions suited for DL optimization includes functions definable on a o-minimal structure and analytic functions, see \cite{Lojasiewicz_gradient,Kurdyka,Bolte_KL,Bolte_semi_analytic}. Although there is a lot of works on this class concerning convergence guarantees \cite{Absil,Lyap_Theory_Bilel,Bilel_ICML,Aujol_recap,HB_nonconvex_acceleration,iPiano,KL_taux,algebraic_momentum,descent_KL}, there is very few works on complexity guarantees for this class of functions \cite{PL_lower_bound,Josz}. In \cite{Josz}, the author proves that for definissable functions on o-minimal structures:
\begin{equation*}
	\min_{0\leq k \leq n-1}\|\nR(\theta_k)\|=o(n^{-1}),
\end{equation*}
compared to the $\mathcal{O}(\sqrt{n})$ for first-order algorithms on smooth functions. However, the author needs the time step to depend both on the lipschitz constant of $\R$ and $\nR$ to derive this bound.

The goal of this work is to analyse the complexity of Armijo-like conditions with a particular backtracking procedure (because they do not need tuning to work), under these new assumptions related to DL optimization:
\begin{itemize}
	\item In section \ref{section_armijo_smooth}, we establish a new and optimal upper bound for the complexity of an Armijo-like algorithm coming from (and implemented in) \cite{Bilel_thesis,Rondepierre}. This bound highlights the same advantage than clipping GD with a better dependence on $L_0,L_1$ constants, without any hyperparameter tuning.
	\item In section \ref{section_armijo_analytic}, we derive an original upper bound on analytic functions for a new Armijo condition drawing inspiration from the interpretation of the classical Armijo inequality suggested in \cite{Lyap_Theory_Bilel}. This bound is optimal on analytic functions with regard to the $\epsilon$-dependence and exhibits a new and interesting relation concerning the gap $\Delta$.
\end{itemize}

\section{Armijo under generalized smoothness assumption}
\label{section_armijo_smooth}

\subsection{The memory backtracking Armijo}

Let us recall first the classical Armijo condition for GD. It consists in finding an adaptive time step $\eta_n$ satisfying:
\begin{equation}
	\R(\theta_n-\eta_n \nR(\theta_n))-\R(\theta_n) \leq -\lambda \eta_n \|\nR(\theta_n)\|^2,
	\label{classical_Armijo}
\end{equation}
where $\lambda \in ]0,1[$ is a constant that could be fixed to $1/2$ in practice \cite{armijo}. For a geometric justification  of this time step constraint, see the book \cite{book_nocedal}. Usually, a backtracking procedure is implemented in order to compute this admissible time step, see \cite{armijo,book_nocedal} for a presentation of the most famous way to implement it. In this work, we study a non-classical backtracking more suitable for DL optimization, presented in Algorithm \ref{algo_LCM}, implemented in \cite{Bilel_thesis} with satisfactory empirical results. In fact, it is claimed and justified in \cite{Rondepierre,Bilel_ICML} and \cite{Bilel_thesis} (chapter2, p55, p58-59) that this procedure needs less function evaluations even for stiff functions. The line 15 of Algorithm \ref{algo_LCM}, called the memory effect, is the main difference with the classical backtracking \cite{book_nocedal}. 

\begin{algorithm}[ht]
	\caption{Memory Armijo}
	\begin{algorithmic}[1]
		\REQUIRE Initial values $\theta_0$, $\eta_{init}$, $\lambda \in ]0,1[$, $f_1>1$, $f_2>1$ and $\epsilon>0$.
		
		\STATE $\theta \leftarrow \theta_0$
		\STATE $g \leftarrow \nR(\theta)$
		
		\WHILE{$\|g\|>\epsilon$}
		\STATE $R_0 \leftarrow \R(\theta)$
		\STATE $\dot{V} \leftarrow \|g\|^2$
		\STATE $\theta_0 \leftarrow \theta$
		\REPEAT
		\STATE $\theta \leftarrow \theta - \eta g$
		\STATE $ R \leftarrow \R(\theta)$
		\IF{$R-R_0>-\lambda \eta \dot{V}$}
		\STATE $\eta \leftarrow \frac{\eta}{f_1}$
		\STATE $\theta \leftarrow \theta_0$
		\ENDIF
		\UNTIL{$R-R_0 \leq -\lambda \eta \dot{V}$}
		\STATE $\eta \leftarrow f_2 \eta$
		\STATE $n \leftarrow n+1$
		\ENDWHILE
		\RETURN $\theta$
	\end{algorithmic}
	\label{algo_LCM}
\end{algorithm}

\subsection{Descent inequality for generalized smoothness}

Let us begin the analysis of Algorithm \ref{algo_LCM} under the $(L_0,L_1)$ smoothness condition. Inequality \eqref{L0_L1} is not easy to handle since the hessian does not appear naturally in the formulation of the algorithm. However, the classical smoothness condition could be formulated in different ways for second order differentiable functions:
\begin{equation}
	\forall \theta \in \Rb^N, \|\nabla ^2 \R(\theta)\| \leq L,
	\label{lip1}
\end{equation}
\begin{equation}
	\forall y_1, y_2 \in \Rb^N, \|\nR(y_2)-\nR(y_1)\| \leq L \|y_2-y_1\|,
	\label{lip2}
\end{equation}
\begin{equation}
	\forall y_1, y_2 \in \Rb^N, \R(y_2)-\R(y_1) \leq \nR(y_1)\cdot (y_2-y_1) + \frac{L}{2}\|y_2-y_1\|^2.
	\label{lip3}
\end{equation}

The formulation of the equation \eqref{lip1} is equivalent to the one of the equation \eqref{L0_L1}. First, we need something that looks like \eqref{lip2} for a $(L_0,L_1)$ smooth function.

\begin{lemma}
	Assume that $\R$ is $(L_0,L_1)$ smooth. Then for $y_1, y_2 \in \Rb^N$, we have:
	\begin{multline*}
		\|\nR(\gamma(t))-\nR(\gamma(0))\| \leq -\frac{L_0}{L_1}-\|\nR(y_1)\|+\left(\frac{L_0}{L_1}+\|\nR(y_1)\|\right) \\
		\exp{\left(L_1\|y_2-y_1\|t\right)},  
	\end{multline*}
	where $\gamma(t) = (1-t) y_1 + t y_2$ for all $t\in [0,1]$.
	\label{lip_general_nR}
\end{lemma}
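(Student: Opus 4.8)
The plan is to set $\phi(t) := \|\nR(\gamma(t))-\nR(\gamma(0))\|$ and to derive a differential inequality for $\phi$, then integrate it via Grönwall's lemma. First I would observe that $\gamma$ is an affine path with $\gamma'(t) = y_2 - y_1$, so by the chain rule $\frac{d}{dt}\nR(\gamma(t)) = \nnR(\gamma(t))(y_2-y_1)$, and hence whenever $\phi(t)>0$,
\begin{equation*}
	\phi'(t) \leq \left\|\frac{d}{dt}\nR(\gamma(t))\right\| \leq \|\nnR(\gamma(t))\|\,\|y_2-y_1\|.
\end{equation*}
(At points where $\phi(t)=0$ one argues with the upper Dini derivative, or just notes $\phi$ is a composition of the $1$-Lipschitz norm with a $C^1$ map, hence locally Lipschitz and differentiable a.e.) Using the $(L_0,L_1)$ smoothness bound \eqref{L0_L1} at the point $\gamma(t)$ gives $\|\nnR(\gamma(t))\| \leq L_0 + L_1\|\nR(\gamma(t))\|$.

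Next I would bound $\|\nR(\gamma(t))\|$ in terms of $\phi(t)$: by the triangle inequality $\|\nR(\gamma(t))\| \leq \|\nR(\gamma(0))\| + \phi(t) = \|\nR(y_1)\| + \phi(t)$. Substituting into the previous display,
\begin{equation*}
	\phi'(t) \leq \|y_2-y_1\|\Bigl(L_0 + L_1\|\nR(y_1)\| + L_1\phi(t)\Bigr).
\end{equation*}
This is a linear differential inequality in $\phi$ with $\phi(0)=0$. Setting $a := L_1\|y_2-y_1\|$ and $b := \|y_2-y_1\|(L_0+L_1\|\nR(y_1)\|)$, Grönwall (or explicitly, comparing with the solution of $\psi' = a\psi + b$, $\psi(0)=0$, namely $\psi(t) = \frac{b}{a}(e^{at}-1)$) yields
\begin{equation*}
	\phi(t) \leq \frac{L_0+L_1\|\nR(y_1)\|}{L_1}\left(e^{L_1\|y_2-y_1\|t}-1\right) = -\frac{L_0}{L_1} - \|\nR(y_1)\| + \left(\frac{L_0}{L_1}+\|\nR(y_1)\|\right)e^{L_1\|y_2-y_1\|t},
\end{equation*}
which is exactly the claimed inequality.

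The only slightly delicate point is the non-smoothness of $t \mapsto \|\nR(\gamma(t))\|$ at zeros of its argument; I would handle this either by working with the right-hand (upper Dini) derivative throughout — Grönwall's lemma holds in that generality — or by the standard trick of bounding $\sqrt{\|\nR(\gamma(t))\|^2 + \delta^2}$ and letting $\delta \to 0^+$. Everything else is a routine chain-rule computation and an application of the integral form of Grönwall's inequality, so I do not anticipate a real obstacle. One should also note that $(L_0,L_1)$ smoothness only gives a bound on $\|\nnR\|$ pointwise, which is all that is used here; no global Lipschitz constant for $\nR$ is needed, which is precisely the point of the lemma.
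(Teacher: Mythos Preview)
Your argument is correct and follows the same Gr\"onwall-based strategy as the paper. The one notable difference is that the paper works with the \emph{integral} form rather than the differential one: it writes
\[
\nR(\gamma(t))-\nR(\gamma(0)) = \int_0^t \nnR(\gamma(\tau))(y_2-y_1)\,d\tau,
\]
takes norms, applies \eqref{L0_L1} and the triangle inequality under the integral, and obtains an integral inequality of the form $u(t)\leq \alpha t + \beta\int_0^t u(\tau)\,d\tau$ to which the integral Gr\"onwall lemma is applied (followed by an integration by parts to reach the stated closed form). Your differential-inequality route is a bit cleaner at the end---you read off $\phi(t)\leq \tfrac{b}{a}(e^{at}-1)$ directly without the extra integration by parts---but it forces you to address the non-differentiability of $t\mapsto\|\nR(\gamma(t))-\nR(\gamma(0))\|$ at its zeros, which you correctly flag and handle. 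The paper's integral formulation sidesteps that technicality entirely, since no differentiation of a norm is ever needed. Either way the content is the same.
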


{\it Proof} 
By Taylor-Lagrange formula, we can write:
\begin{equation*}
	\nR(\gamma(t))-\nR(\gamma(0)) = \int_{0}^t \nabla^2\R(\gamma(\tau))(y_2-y_1)d\tau. 
\end{equation*}
The assumption \eqref{L0_L1} leads to:
\begin{multline*}
	\|\nR(\gamma(t))-\nR(\gamma(0))\| \leq \|y_2-y_1\| \\ \int_{0}^t \left(L_0+L_1\|\nR(\gamma(\tau))-\nR(\gamma(0))+\nR(\gamma(0))\|\right)d\tau.
\end{multline*}
The triangular inequality gives:
\begin{multline*}
	\|\nR(\gamma(t))-\nR(\gamma(0))\| \leq \left(L_0+L_1\|\nR(y_1)\|\right)\|y_2-y_1\|t  \\ + L_1\|y_2-y_1\| \int_{0}^t\|\nR(\gamma(\tau))-\nR(\gamma(0))\|d\tau.
\end{multline*}
By applying Gronwall inequality, we get:
\begin{multline*}
	\|\nR(\gamma(t))-\nR(\gamma(0))\| \leq \left(L_0+L_1\|\nR(y_1)\|\right)\|y_2-y_1\|t \\ + \int_{0}^t L_1\left(L_0+L_1\|\nR(y_1)\right)\|y_2-y_1\|^2 s \exp{\left(L_1\|y_2-y_1\|(t-s)\right)}ds.
\end{multline*}
By an integration by parts of $s\mapsto se^{-s}$, the result is derived. 
\qed

The essential element is to have the equivalent of \eqref{lip3} for $(L_0,L_1)$ smoothness using the previous lemma.

\begin{lemma}
	Assume that $\R$ is $(L_0,L_1)$ smooth. Then for all $y_1, y_2 \in \Rb^N$, we have:
	\begin{multline*}
		\R(y_2)-\R(y_1) \leq \nR(y_1)\cdot (y_2-y_1) - \left(L_0+L_1\|\nR(y_1)\|\right)\|y_2-y_1\| \\ 
		-\frac{1}{L_1^2}\left(L_0+L_1\|\nR(y_1)\|\right) + \frac{1}{L_1^2}\left(L_0+L_1\|\nR(y_1)\|\right)\exp{\left(L_1\|y_2-y_1\|\right)}.
	\end{multline*}
	\label{lip_general_R}
\end{lemma}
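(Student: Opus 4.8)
The plan is to integrate the pointwise gradient estimate of Lemma~\ref{lip_general_nR} along the segment joining $y_1$ to $y_2$, mirroring exactly the way the descent inequality \eqref{lip3} is deduced from \eqref{lip2} in the classical $L$-smooth case. If $y_1=y_2$ the statement is trivial, so assume $y_1\neq y_2$ and set $\gamma(t)=(1-t)y_1+ty_2$ for $t\in[0,1]$, so that $\gamma'(t)=y_2-y_1$, $\gamma(0)=y_1$, and $t\mapsto\R(\gamma(t))$ is $C^1$.

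First I would apply the fundamental theorem of calculus and immediately peel off the first-order term:
\begin{multline*}
	\R(y_2)-\R(y_1)=\int_0^1\nR(\gamma(t))\cdot(y_2-y_1)\,dt \\
	=\nR(y_1)\cdot(y_2-y_1)+\int_0^1\bigl(\nR(\gamma(t))-\nR(\gamma(0))\bigr)\cdot(y_2-y_1)\,dt,
\end{multline*}
where the splitting is legitimate because $\gamma(0)=y_1$ is constant, so $\int_0^1\nR(\gamma(0))\cdot(y_2-y_1)\,dt=\nR(y_1)\cdot(y_2-y_1)$. It matters to subtract $\nR(y_1)\cdot(y_2-y_1)$ \emph{before} any estimation, so that the Cauchy--Schwarz loss below only affects the higher-order remainder.

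Next, the Cauchy--Schwarz inequality bounds the remaining integrand by $\|\nR(\gamma(t))-\nR(\gamma(0))\|\,\|y_2-y_1\|$, and Lemma~\ref{lip_general_nR} bounds $\|\nR(\gamma(t))-\nR(\gamma(0))\|$ by $-\tfrac{L_0}{L_1}-\|\nR(y_1)\|+\bigl(\tfrac{L_0}{L_1}+\|\nR(y_1)\|\bigr)\exp(L_1\|y_2-y_1\|t)$. Substituting this and pulling the constant $\|y_2-y_1\|$ out of the integral, everything reduces to the two elementary primitives $\int_0^1 dt=1$ and $\int_0^1\exp(L_1\|y_2-y_1\|t)\,dt=(L_1\|y_2-y_1\|)^{-1}\bigl(\exp(L_1\|y_2-y_1\|)-1\bigr)$; the factor $(L_1\|y_2-y_1\|)^{-1}$ cancels the $\|y_2-y_1\|$ prefactor pulled out earlier, which is precisely what produces the clean $1/L_1^2$ coefficients. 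Collecting the three resulting contributions — one proportional to $\|y_2-y_1\|$, one constant, and one proportional to $\exp(L_1\|y_2-y_1\|)$ — and using $\tfrac{L_0}{L_1}+\|\nR(y_1)\|=\tfrac{1}{L_1}\bigl(L_0+L_1\|\nR(y_1)\|\bigr)$ yields, after routine algebra, the right-hand side of the statement.

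I do not anticipate any real obstacle: the whole argument is a one-dimensional integration of an estimate already established in Lemma~\ref{lip_general_nR}. The only points deserving attention are the bookkeeping of signs and of the powers of $L_1$ during the simplification — in particular noticing that the factor $\|y_2-y_1\|$ generated by integrating the exponential is exactly what converts the $1/(L_1\|y_2-y_1\|)$ appearing in the primitive into the $1/L_1^2$ of the final inequality — and checking that the bound degenerates gracefully as $\|y_2-y_1\|\to0$, recovering the trivial first-order expansion $\R(y_2)\approx\R(y_1)+\nR(y_1)\cdot(y_2-y_1)$ in that limit.
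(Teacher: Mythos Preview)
Your proposal is correct and follows essentially the same route as the paper's proof: apply the fundamental theorem of calculus along the segment $\gamma$, separate out the linear term $\nR(y_1)\cdot(y_2-y_1)$, bound the remainder by Cauchy--Schwarz, invoke Lemma~\ref{lip_general_nR}, and then integrate the resulting exponential expression in $t$. The paper's argument is identical step for step, only more tersely written.
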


{\it Proof} 
The Taylor-Lagrange formula leads to:
	\begin{multline*}
		\R(y_2) = \R(y_1) + \int_{0}^1 \nR(\gamma(t))^T(y_2-y_1)dt \\
		= \R(y_1) + \int_{0}^1 \left[\nR(\gamma(t))-\nR(\gamma(0))+\nR(y_1)\right]^T(y_2-y_1)dt.
	\end{multline*}
	Cauchy-Schwarz inequality gives:
	\begin{equation*}
		\R(y_2) - \R(y_1) \leq \nR(y_1)\cdot (y_2-y_1)+\|y_2-y_1\|\int_0^1\|\nR(\gamma(t))-\nR(\gamma(0))\|dt.
	\end{equation*}
	Applying lemma \ref{lip_general_nR}, we get:
	\begin{multline*}
		\R(y_2) - \R(y_1) \leq \nR(y_1)\cdot (y_2-y_1)\\
		+\|y_2-y_1\|\int_0^1 \left[-\frac{L_0}{L_1}-\|\nR(y_1)\|+\left(\frac{L_0}{L_1}+\|\nR(y_1)\|\right)\exp{\left(L_1\|y_2-y_1\|t\right)} \right]dt.
	\end{multline*}
	Computing this integral finishes the proof.
	\qed
	
\subsection{Complexity result}

For a smooth function, inequality \eqref{lip3} combined with the Armijo rule \eqref{classical_Armijo} leads to the resolution of an affine equation to derive the time step that satisfies uniformly this condition. For $(L_0,L_1)$ smooth function, this leads to the resolution of a Lambert equation. A simple study leads to the following result:
\begin{lemma}
	Let us define the function $h(x) \coloneqq -a -bx + ae^{cx}$ with $a,b,c>0$. Let us assume $b>ac$. If $0 < x \leq \frac{1}{c}\ln{\left(\frac{b}{ac}\right)}$, then $h(x) <0$.
	\label{h_function}
\end{lemma}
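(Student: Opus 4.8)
The plan is to use the observation that $h(0)=0$ together with a one–line monotonicity analysis of $h$ on the interval in question. Since the endpoint $x^{*}:=\frac{1}{c}\ln\!\left(\frac{b}{ac}\right)$ is exactly the critical point of $h$, the whole argument reduces to showing $h$ is strictly decreasing on $[0,x^{*}]$, which then forces $h(x)<h(0)=0$ for every $x\in(0,x^{*}]$.

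Concretely, I would first evaluate $h(0)=-a-b\cdot 0+ae^{0}=-a+a=0$. Next I would differentiate: $h'(x)=-b+ac\,e^{cx}$. Because $x\mapsto e^{cx}$ is strictly increasing, $h'$ is strictly increasing, and $h'(x)=0$ holds if and only if $e^{cx}=\frac{b}{ac}$, i.e.\ $x=x^{*}$. The hypothesis $b>ac$ guarantees $\frac{b}{ac}>1$, hence $x^{*}>0$, so the interval $(0,x^{*}]$ is nonempty and well defined. From the monotonicity of $h'$ we then get $h'(x)<h'(x^{*})=0$ for all $x\in[0,x^{*})$.

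Finally I would conclude by the mean value theorem (or the standard fact that a continuous function with nonpositive derivative on $[0,x^{*}]$ that is strictly negative on the interior is strictly decreasing on $[0,x^{*}]$): for any $x$ with $0<x\le x^{*}$ there is $\xi\in(0,x)$ with $h(x)-h(0)=h'(\xi)\,x<0$, so $h(x)<h(0)=0$, which is the assertion.

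The proof is essentially routine; the only point requiring a moment of care is the endpoint $x=x^{*}$, where $h'$ vanishes rather than being strictly negative. This is harmless: strict monotonicity on the closed interval $[0,x^{*}]$ still follows because $h'<0$ throughout the interior $(0,x^{*})$, so the value at $x^{*}$ is still strictly below $h(0)$. No Lambert-function machinery is needed for this lemma itself — that enters only when one later plugs in the specific $a,b,c$ coming from Lemma~\ref{lip_general_R} and the Armijo rule.
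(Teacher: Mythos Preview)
Your proof is correct and is exactly the ``simple study'' the paper alludes to; the paper does not spell out the argument, and your computation of $h(0)=0$, $h'(x)=-b+ac\,e^{cx}$, together with the observation that $x^{*}=\frac{1}{c}\ln(b/(ac))$ is the unique critical point and that $h'<0$ on $(0,x^{*})$, is the intended route. Your handling of the endpoint $x=x^{*}$ is also fine.
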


Equipped with these tools, we should find a time step depending on $L_0$ and $L_1$ which satisfies the Armijo condition \eqref{classical_Armijo}. According to lemma \ref{lip_general_R}, if $\eta_n$ satisfies \eqref{classical_Armijo}, then the following relation holds:
\begin{multline*}
	-\frac{1}{L_1^2}\left(L_0+L_1\|\nR(\theta_n)\|\right)-\frac{1}{L_1}\left(L_0+L_1(2-\lambda)\|\nR(\theta_n)\|\right)\|\nR(\theta_n)\| \eta_n \\ + \frac{1}{L_1^2}\left(L_0+L_1\|\nR(\theta_n)\|\right)\exp{\left(L_1 \|\nR(\theta_n)\| \eta_n\right)} \leq 0.
\end{multline*}
Using lemma \ref{h_function}, a solution to this inequation is given by:
\begin{equation*}
	\tilde{\eta}_n = \frac{1}{L_1\|\nR(\theta_n)\|}\ln{\left(\dfrac{L_0+L_1(2-\lambda)\|\nR(\theta_n)\|}{L_0+L_1\|\nR(\theta_n)\|}\right)}.
\end{equation*}
At iteration $n$, the time step found by the backtracking satisfies $\eta_n \geq \frac{\tilde{\eta_n}}{f_1}$. Then for all $k \geq 0$, we get:
\begin{multline*}
	\R(\theta_k)-\R(\theta_{k+1}) \geq \lambda \eta_k \|\nR(\theta_k)\|^2 \\ \geq 
	\frac{\lambda}{f_1 L_1}\ln{\left(\dfrac{L_0+L_1(2-\lambda)\|\nR(\theta_k)\|}{L_0+L_1\|\nR(\theta_k)\|}\right)}\|\nR(\theta_k)\|.
\end{multline*}
From now on, let us denote by $n$ the first iteration such that 
$\|\nR(\theta_n)\| \leq \epsilon$. This means that for all $0 \leq k \leq n-1$: $\|\nR(\theta_k)\| > \epsilon$.
Given that the function $z \mapsto \ln{\left(\dfrac{L_0+L_1(2-\lambda)z}{L_0+L_1z}\right)}z$ is increasing on $\Rb_+$,then we have for all $0 \leq k \leq n-1$:
\begin{equation*}
	\R(\theta_k)-\R(\theta_{k+1}) \geq \frac{\lambda}{f_1 L_1}\ln{\left(\dfrac{L_0+L_1(2-\lambda)\epsilon}{L_0+L_1\epsilon}\right)}\epsilon.
\end{equation*}
Summing the inequality above from $k=0$ to $k=n-1$, we get:
\begin{equation*}
	\R(\theta_0)-\R(\theta_n) \geq \frac{\lambda}{f_1 L_1}\ln{\left(\dfrac{L_0+L_1(2-\lambda)\epsilon}{L_0+L_1\epsilon}\right)}\epsilon n.
\end{equation*}
Using the fact that $\R(\theta_n) \geq \R^*$ for all $n\geq 0$ and by inverting the previous inequality, the following bound is derived:
\begin{theorem}
	Let $\lambda \in ]0,1[$, $f_1>1$ and $f_2>1$. Assume tyhat $\R$ is $(L_0,L_1)$ smooth and let $\epsilon>0$. 
	If
	\begin{equation*}
		n\geq f_1 \dfrac{L_1\Delta}{\lambda \epsilon \ln{\left(\frac{L_0+L_1(2-\lambda)\epsilon}{L_0+L_1\epsilon}\right)}},
	\end{equation*}
	then $\displaystyle{\min_{0 \leq k \leq n}}\nR(\theta_k) \leq \epsilon$ where $(\theta_n)_{n \in \mathbb{N}}$ is generated by Algorithm \ref{algo_LCM}.
	\label{LCEGD_eps}
\end{theorem}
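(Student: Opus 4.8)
The plan is to show that the step sizes produced by Algorithm \ref{algo_LCM} are bounded below by an explicit quantity depending only on $L_0,L_1,\lambda$ and $\|\nR(\theta_k)\|$, and then to telescope the Armijo decrease. First I would instantiate Lemma \ref{lip_general_R} along one gradient step: take $y_1=\theta_k$ and $y_2=\theta_k-\eta\nR(\theta_k)$, so that $\|y_2-y_1\|=\eta\|\nR(\theta_k)\|$ and $\nR(y_1)\cdot(y_2-y_1)=-\eta\|\nR(\theta_k)\|^2$. Substituting and asking for which $\eta>0$ the resulting upper bound on $\R(\theta_k-\eta\nR(\theta_k))-\R(\theta_k)$ is $\le-\lambda\eta\|\nR(\theta_k)\|^2$ (the Armijo test of lines 10/14) rearranges, after collecting the two linear-in-$\eta$ terms, into exactly the inequation displayed just before the theorem, namely $h(\eta)\le 0$ with
\[
h(\eta)\coloneqq -a-b\eta+ae^{c\eta},\qquad a=\frac{L_0+L_1\|\nR(\theta_k)\|}{L_1^2},\quad b=\frac{\bigl(L_0+L_1(2-\lambda)\|\nR(\theta_k)\|\bigr)\|\nR(\theta_k)\|}{L_1},\quad c=L_1\|\nR(\theta_k)\|.
\]
Since $\lambda<1$ one checks $b/(ac)=\bigl(L_0+L_1(2-\lambda)\|\nR(\theta_k)\|\bigr)/\bigl(L_0+L_1\|\nR(\theta_k)\|\bigr)>1$, so $b>ac$ and Lemma \ref{h_function} applies: every $\eta\in\bigl(0,\tilde\eta_k\bigr]$ with $\tilde\eta_k=\tfrac{1}{L_1\|\nR(\theta_k)\|}\ln\bigl(\tfrac{L_0+L_1(2-\lambda)\|\nR(\theta_k)\|}{L_0+L_1\|\nR(\theta_k)\|}\bigr)$ passes the Armijo test.

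Next I would use the contrapositive to lower-bound the accepted step $\eta_k$: whenever the inner loop divides $\eta$ by $f_1$, the rejected value violated Armijo, hence by the previous step it exceeded $\tilde\eta_k$; the last rejected value is $f_1\eta_k$, so $\eta_k\ge\tilde\eta_k/f_1$. Feeding this into the Armijo inequality gives, for every $k$,
\[
\R(\theta_k)-\R(\theta_{k+1})\;\ge\;\lambda\eta_k\|\nR(\theta_k)\|^2\;\ge\;\frac{\lambda}{f_1L_1}\,\|\nR(\theta_k)\|\,\ln\!\left(\frac{L_0+L_1(2-\lambda)\|\nR(\theta_k)\|}{L_0+L_1\|\nR(\theta_k)\|}\right).
\]

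Finally I would conclude by a standard telescoping argument. A one-line derivative computation shows $z\mapsto z\ln\bigl(\tfrac{L_0+L_1(2-\lambda)z}{L_0+L_1z}\bigr)$ is nondecreasing on $\Rb_+$ (the inner ratio has derivative proportional to $L_0L_1(1-\lambda)>0$, and it is multiplied by the increasing factor $z$). Letting $n$ be the first index with $\|\nR(\theta_n)\|\le\epsilon$, every $k\le n-1$ satisfies $\|\nR(\theta_k)\|>\epsilon$, so the per-step decrease above is at least $\tfrac{\lambda\epsilon}{f_1L_1}\ln\bigl(\tfrac{L_0+L_1(2-\lambda)\epsilon}{L_0+L_1\epsilon}\bigr)$. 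Summing from $k=0$ to $n-1$ and using $\R(\theta_n)\ge\R^*$ yields $\Delta\ge \tfrac{\lambda\epsilon n}{f_1L_1}\ln\bigl(\tfrac{L_0+L_1(2-\lambda)\epsilon}{L_0+L_1\epsilon}\bigr)$; solving for $n$ produces exactly the stated threshold, whence $\min_{0\le k\le n}\|\nR(\theta_k)\|\le\epsilon$ as soon as $n$ reaches it.

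The genuinely delicate point is the uniform lower bound $\eta_k\ge\tilde\eta_k/f_1$ on the backtracked step: the contrapositive argument covers every branch in which at least one division occurs, but the case where the carried-over (or initial) step is accepted on the first try must be handled separately — closing it requires an induction on $k$ that tracks how the memory update $\eta\leftarrow f_2\eta$ of line 15 interacts with the initialisation $\eta_{init}$, and this bookkeeping across the backtracking, together with the memory effect, is the main obstacle. Everything else is substitution, the elementary Lambert-type estimate of Lemma \ref{h_function}, and telescoping.
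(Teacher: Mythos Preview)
Your proposal is correct and follows essentially the same approach as the paper: instantiate Lemma~\ref{lip_general_R} along a gradient step, reduce Armijo sufficiency to the Lambert-type inequation $h(\eta)\le0$ handled by Lemma~\ref{h_function}, lower-bound the backtracked step by $\tilde\eta_k/f_1$, invoke the monotonicity of $z\mapsto z\ln\bigl(\tfrac{L_0+L_1(2-\lambda)z}{L_0+L_1z}\bigr)$, and telescope. The subtlety you flag about the no-division branch and the memory update is not addressed in the paper either---it simply asserts $\eta_n\ge\tilde\eta_n/f_1$ without comment---so your caution there goes slightly beyond what the paper writes.
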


Let us comment this theorem.
First of all, this result does not involve $M$, in the same way than clipping GD (form of acceleration according to \cite{gd_clipping_accelerates}). Besides, the given estimation could be simplified for small $\epsilon$ into (for $L_0 \neq 0$):
\begin{equation*}
	f_1\dfrac{L_1\Delta}{\lambda \epsilon \ln{\left(\frac{L_0+L_1(2-\lambda)\epsilon}{L_0+L_1\epsilon}\right)}} \underset{\epsilon \to 0}{\sim} f_1\dfrac{L_0\Delta}{ \lambda (1-\lambda)} \frac{1}{\epsilon^2}.
\end{equation*}
In practice (see \cite{Bilel_thesis,Bilel_ICML}), $f_1=2$ and $\lambda=\frac{1}{2}$ so we have:
\begin{equation}
	f_1 \dfrac{L_1\Delta}{\lambda \epsilon \ln{\left(\frac{L_0+L_1(2-\lambda)\epsilon}{L_0+L_1\epsilon}\right)}} \underset{\epsilon \to 0}{\sim} \dfrac{8 L_0\Delta}{\epsilon^2}.
	\label{LCEGD_eps_equivalent}
\end{equation}
Even in the theorem \ref{GD_clipping2} when $\epsilon \to 0$, $L_1$ is involved in the time step. However algorithm \ref{algo_LCM} is able to achieve the optimal bound (up to a constant) without tuning and dependence on $L_1$ (when $\epsilon \to 0$). It is even possible to improve the factor $8$ by sharpening lemma \ref{h_function}: to do it, we need to look for the optimal solution of the inequation $h(x)\leq 0$ which involves the Lambert function.

A careful reader might argue that the previous estimation deals only with the number of iterations and not the number of gradient evaluations like it is the case for constant step optimizers. In \cite{Bilel_ICML,Bilel_thesis} (chapter2, p55-59), an upper bound on the number of evaluations per iteration is given. It is stated that for neural networks, the number of function evaluations per iteration is equivalent to:
\begin{equation*}
\frac{1}{2} \left[1+\dfrac{\log{(f_2)}}{\log{(f_1)}}\right],
\end{equation*}
gradient evaluations, for algorithm \ref{algo_LCM}.
Therefore, taking into account the backtracking (that is to say the function evaluations), the real number of gradient evaluations for these algorithms, for $f_1=2$, is given by:
\begin{equation*}
	4\left[1+\dfrac{\log{(f_2)}}{\log{2}}\right]\dfrac{L_0\Delta}{\epsilon^2}+\dfrac{8 L_0\Delta}{\epsilon^2}.
\end{equation*}
Compared with theorem \ref{GD_clipping2}, the multiplicative constant in our bound is better for small $\epsilon$ for $f_2\leq 2^{4.5} \approx 23$.

\section{Armijo for analytic functions}
\label{section_armijo_analytic}

In the previous section, we have shown that the classical Armijo condition \eqref{classical_Armijo} with the backtracking procedure of algorithm \ref{algo_LCM} achieves optimal complexity under the generalized smoothness assumption. In this section, we prove an acceleration result on analytic functions. As recalled in the introduction, the Nesterov optimizer achieves the optimal complexity $\mathcal{O}(\epsilon^{-1})$ on smooth convex functions. We get a similar speed for non-convex analytic functions without the need to tune hyperparameters.

\subsection{The explicit-implicit Armijo algorithm}

The new Armijo condition that we are going to state comes from a new interpretation of the classical Armijo condition. In fact, GD is an Euler discretization of the gradient flow:
\begin{equation*}
	\theta'(t) = -\nR(\theta(t)).
\end{equation*}  
The objective function is a Lyapunov function since:
\begin{equation*}
	\frac{d}{dt}\R(\theta(t)) = -\|\nR(\theta(t))\|^2.
\end{equation*}
For all $\lambda \in ]0,1[$, we then have:
\begin{equation}
	\frac{d}{dt}\R(\theta(t)) \leq -\lambda \|\nR(\theta(t))\|^2.
	\label{continuous_armijo}
\end{equation}
In \cite{Lyap_Theory_Bilel}, the authors argue that it is essential to discretize equation \eqref{continuous_armijo} in addition to the gradient flow in order to build a stable method (preservation of the Lyapunov function). Then the inequality \eqref{classical_Armijo} could be seen as an Euler discretization of the left hand side of \ref{continuous_armijo} and an explicit discretization of the right hand side. To have more control on the iterates, we add a semi-implicit discretization of the right hand side:
\begin{equation}
	\R(\theta_{n+1})-\R(\theta_n) \leq -\lambda \eta_n \|\nR(\theta_n)\| \|\nR(\theta_{n+1})\|.
	\label{armijo2}
\end{equation}
Therefore the algorithm consists in finding a time step $\eta_n$ that satisfies both \eqref{classical_Armijo} and \eqref{armijo2} using the backtracking procedure \ref{algo_LCM}. In the same manner as the previous section, using the result of \cite{Bilel_thesis}, we have to multiply the iteration complexity by:
\begin{equation*}
	\frac{3}{2}\left[1+\dfrac{\log{(f_2)}}{\log{(f_1)}}\right]+1
\end{equation*}
to get the number of gradient evaluations (taking into account the function evaluations).
For convenience, let us call this algorithm Explicit-Implicit Armijo (EIA). Let us note that EIA looks like \textbf{IMEX schemes} \cite{IMEX} that combine explicit and implicit discretizations.

\subsection{Convergence of EIA}
Let us first check that EIA converges. Given that EIA satisfies in particular the classical Armijo condition, it is enough to check that the time steps satisfying \eqref{armijo2} are lower bounded (do not vanish asymptotically) according to \cite{Lyap_Theory_Bilel,Rondepierre}. 

From now on, we assume that $\R$ is \textbf{analytic and radially unbounded}. Let us condider the following compact sets:
\begin{equation*}
	K = \overline{K_0+B(0,1)},
\end{equation*}
where:
\begin{equation*}
	K_0 = \overline{\{ \theta_n, n \in \mathbb{N}\}} \subset \mathcal{S}.
\end{equation*}
We denote respectively by $M$ and $L$ the lipschitz constants of $\R$ and $\nR$ on $conv(K)$ ($\nR$ is locally lipschitz continuous). 

\begin{lemma}
	Let us take a time step satisfying:
	\begin{equation*}
		\eta \leq \dfrac{2(1-\lambda)}{1+2\lambda}\dfrac{1}{L}.
	\end{equation*}
	Then for all $\theta \in K$, we have:
	\begin{equation*}
		\R(\theta-\eta \nR(\theta)) - \R(\theta) \leq -\lambda \eta \|\nR(\theta)\|\|\nR(\theta-\eta\nR(\theta))\|.
	\end{equation*}
	\label{lemma_minore_ILC}
\end{lemma}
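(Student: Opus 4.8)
The goal is to show that any sufficiently small step $\eta$ simultaneously forces the semi-implicit Armijo inequality \eqref{armijo2} to hold at every $\theta \in K$. Write $\theta^+ = \theta - \eta\nR(\theta)$. The plan is to start from the standard descent estimate coming from $L$-smoothness on $\mathrm{conv}(K)$ (which contains the segment $[\theta,\theta^+]$ as soon as $\eta \|\nR(\theta)\| \le 1$, guaranteed by the bound on $\eta$ together with $\|\nR(\theta)\|\le M$ — one should check the chosen bound indeed keeps $\theta^+$ close enough, or simply note $\theta^+\in K$ by definition of $K$). That gives
\begin{equation*}
	\R(\theta^+) - \R(\theta) \le -\eta\|\nR(\theta)\|^2 + \frac{L\eta^2}{2}\|\nR(\theta)\|^2 = -\eta\|\nR(\theta)\|^2\left(1 - \frac{L\eta}{2}\right).
\end{equation*}

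The second ingredient is a lower bound relating $\|\nR(\theta^+)\|$ to $\|\nR(\theta)\|$. By $L$-Lipschitzness of $\nR$ on $\mathrm{conv}(K)$,
\begin{equation*}
	\|\nR(\theta^+)\| \le \|\nR(\theta)\| + L\|\theta^+ - \theta\| = \|\nR(\theta)\| + L\eta\|\nR(\theta)\| = (1+L\eta)\|\nR(\theta)\|,
\end{equation*}
hence $\|\nR(\theta)\|\|\nR(\theta^+)\| \le (1+L\eta)\|\nR(\theta)\|^2$, i.e. $\|\nR(\theta)\|^2 \ge \frac{1}{1+L\eta}\|\nR(\theta)\|\|\nR(\theta^+)\|$. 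Combining with the descent estimate,
\begin{equation*}
	\R(\theta^+) - \R(\theta) \le -\eta\left(1 - \frac{L\eta}{2}\right)\|\nR(\theta)\|^2 \le -\frac{\eta}{1+L\eta}\left(1 - \frac{L\eta}{2}\right)\|\nR(\theta)\|\|\nR(\theta^+)\|,
\end{equation*}
using that the coefficient $1 - L\eta/2$ is nonnegative for the range of $\eta$ considered. So it suffices to have $\dfrac{1}{1+L\eta}\left(1 - \dfrac{L\eta}{2}\right) \ge \lambda$.

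The final step is purely algebraic: solve $1 - L\eta/2 \ge \lambda(1+L\eta)$ for $\eta$. This rearranges to $1 - \lambda \ge L\eta(\lambda + 1/2) = \frac{L\eta(1+2\lambda)}{2}$, i.e. $\eta \le \dfrac{2(1-\lambda)}{1+2\lambda}\dfrac{1}{L}$, which is exactly the stated hypothesis. One should also double-check that this $\eta$ is indeed $\le 2/L$ (so the parenthesis in the descent bound stays nonnegative and the segment argument is valid): since $\frac{2(1-\lambda)}{1+2\lambda} < 2$ for $\lambda\in]0,1[$, this holds. I do not expect any real obstacle here; the only mildly delicate point is making the domain bookkeeping rigorous — namely that $[\theta,\theta^+]\subset\mathrm{conv}(K)$ so that the smoothness estimates on $\mathrm{conv}(K)$ apply — which follows from $\theta\in K\subset K_0+B(0,1)$ and $\|\theta^+-\theta\|=\eta\|\nR(\theta)\|\le \eta M$, together with the definition $K=\overline{K_0+B(0,1)}$; if needed one tacitly assumes $\eta M \le 1$, which is the standard convention in this backtracking framework and is consistent with the asymptotic smallness of $\eta$.
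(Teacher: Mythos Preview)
Your proof is correct and follows essentially the same route as the paper: the descent inequality on $\mathrm{conv}(K)$, the Lipschitz bound $\|\nR(\theta^+)\|\le(1+L\eta)\|\nR(\theta)\|$, and the resulting algebraic condition $\frac{1-L\eta/2}{1+L\eta}\ge\lambda$ are exactly the three steps the paper uses. The domain bookkeeping you flag (ensuring $[\theta,\theta^+]\subset\mathrm{conv}(K)$) is not explicitly addressed in the paper either; in the application the lemma is only invoked at $\theta=\theta_n\in K_0$, where it is unproblematic.
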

{\it Proof}
	Let us take $\eta$ satisfying the condition of the lemma.
	By the descent inequality:
	\begin{multline*}
		\R(\theta-\eta \nR(\theta)) - \R(\theta) + \lambda \eta \|\nR(\theta)\|\|\nR(\theta-\eta\nR(\theta))\|  \\ 
		\leq -\eta \|\nR(\theta)\|^2\left(1-\frac{L}{2}\eta\right)+ \lambda \eta \|\nR(\theta)\|\|\nR(\theta-\eta\nR(\theta))\|.
	\end{multline*}
	The triangular inequality in addition with the lipschitzian property of $\nR$ gives:
	\begin{multline*}
		\|\nR(\theta-\eta\nR(\theta))\| = \|\nR(\theta-\eta\nR(\theta))-\nR(\theta)+\nR(\theta)\| \\
		\leq \|\nR(\theta)\| + L\eta \|\nR(\theta)\| \leq (1+L\eta)\|\nR(\theta)\|.
	\end{multline*}
	This leads to:
	\begin{equation*}
		-\|\nR(\theta)\| \leq -\dfrac{1}{1+L\eta}\|\nR(\theta-\eta\nR(\theta))\|.
	\end{equation*}
	Incorporating this in the first inequality and using $\eta\leq \frac{2}{L}$, we get:
	\begin{multline*}
		\R(\theta-\eta \nR(\theta)) - \R(\theta) + \lambda \eta \|\nR(\theta)\|\|\nR(\theta-\eta\nR(\theta))\| \\ 
		\leq -\dfrac{\eta}{1+L\eta}\left(1-\frac{L}{2}\eta\right)\|\nR(\theta)\|\|\nR(\theta-\eta\nR(\theta))\| \\ + \lambda \eta \|\nR(\theta)\|\|\nR(\theta-\eta\nR(\theta))\| \\
		\leq \eta \dfrac{2(\lambda-1)+\eta L(1+2\lambda)}{2+2L\eta}\|\nR(\theta)\|\|\nR(\theta-\eta\nR(\theta))\| \leq 0,
	\end{multline*}
	using the time step condition.
\qed

Combining the previous inequality with proposition 1 of \cite{Lyap_Theory_Bilel} and since $1+2\lambda\geq 1$, the admissible time steps are lower bounded by:
\begin{equation*}
	\forall n \in \mathbb{N}, \eta_n \geq \dfrac{2(1-\lambda)}{f_1(1+2\lambda)}\dfrac{1}{L} \coloneq \eta^*.
\end{equation*}
Using theorem 6 of \cite{Lyap_Theory_Bilel}, the sequence $(\theta_n)_{n\in \mathbb{N}}$ generated by EIA converges to a critical point of $\R$.

\subsection{Global Lojasiewicz inequality}
In order to estimate precisely the complexity of EIA, we need a global Lojasiewicz inequality to bound accurately the sum of increments $\|\theta_{n+1}-\theta_n\|=\eta_n \|\nR(\theta_n)\|$ for two reasons:
\begin{itemize}
	\item first the proof of the $\mathcal{O}(\epsilon^{-2})$ bound for smooth function goes through the estimation of the sum of $\eta_n \|\nR(\theta_n)\|^2$  \cite{Nesterov_book} so it is natural to focus on $\eta_n \|\nR(\theta_n)\|$ to derive a $\mathcal{O}(\epsilon^{-1})$ bound.
	\item In the proof of the convergence of $(\theta_n)$ \cite{Lyap_Theory_Bilel}, the local Lojasiewicz inequality enables only to estimate the rest of the serie $\|\theta_{n+1}-\theta_n\|$ since we need $n$ large enough to apply the Lojasiewicz inequality in the neighborhood of a critical point. 
\end{itemize}

\begin{lemma}
	There exists an increasing and concave function  $\phi: [0,+\infty[ \mapsto [0,+\infty[$ differentiable on $]0,+\infty[$ satisfying:
	\begin{equation}
		\theta \in K, \R(\theta) \notin \image \implies \phi'\left(\tR(\theta)\right)\|\nR(\theta)\|\geq 1,
	\end{equation}
	where $\tR(\theta) = d\left(\R(\theta),\image\right)$. 
	\label{lemma_phi}
\end{lemma}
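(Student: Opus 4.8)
The plan is to build $\phi$ explicitly from a Łojasiewicz-type inequality on the compact set $K$ and the analyticity of $\R$. First I would invoke the Łojasiewicz gradient inequality (which holds for analytic functions, hence on the compact $K$) in the form: for each critical value $v$ attained on $\statio\cap K$, there are constants $C>0$ and an exponent $\vartheta\in[0,1[$ such that $\|\nR(\theta)\|\geq C\,|\R(\theta)-v|^{\vartheta}$ on a neighborhood. Because $\R$ is analytic and $K$ is compact, the set of critical values $\image = \R(\statio\cap K)$ is finite (analytic functions have finitely many critical values on a compact set — this is a standard consequence used in the convergence proof of \cite{Lyap_Theory_Bilel}), so one can patch these local inequalities together with a single exponent $\vartheta$ and a single constant to obtain a \emph{uniform} desingularizing bound of the shape $\|\nR(\theta)\|\geq c\,\bigl(\tR(\theta)\bigr)^{\vartheta}$ for all $\theta\in K$ with $\R(\theta)\notin\image$, where $\tR(\theta)=d(\R(\theta),\image)$. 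Away from the critical values the gradient is bounded below by compactness anyway, which handles the $\vartheta=0$ degenerate behaviour.

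Next I would simply \emph{define} $\phi$ so that $\phi'$ cancels this lower bound: take $\phi'(s) = \tfrac{1}{c} s^{-\vartheta}$ for $s>0$, i.e. $\phi(s) = \tfrac{1}{c(1-\vartheta)} s^{1-\vartheta}$ when $\vartheta<1$ (and $\phi(s)=\tfrac1c\ln(1+s)$-type modification if one needs to cover $\vartheta$ close to $1$, or more simply $\phi(s)=\tfrac1c\,s$ in the trivial case $\vartheta=0$). This $\phi$ maps $[0,\infty[$ to $[0,\infty[$, is increasing, is concave precisely because the exponent $1-\vartheta\in\,]0,1]$, and is differentiable on $]0,\infty[$ with $\phi'$ decreasing. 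Then for $\theta\in K$ with $\R(\theta)\notin\image$ one gets directly $\phi'(\tR(\theta))\,\|\nR(\theta)\| \geq \tfrac{1}{c}\,\tR(\theta)^{-\vartheta}\cdot c\,\tR(\theta)^{\vartheta} = 1$, which is exactly the claimed implication.

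The step I expect to be the main obstacle is the uniformization: passing from the \emph{local} Łojasiewicz inequalities (each valid only near one point of $\statio\cap K$, each with its own exponent and constant) to a \emph{single} inequality valid on all of $K$ and phrased in terms of the distance $\tR(\theta)=d(\R(\theta),\image)$ to the whole finite set of critical values rather than the difference to one value. The finiteness of $\image$ is what makes this possible, but one must be careful: (i) cover $K$ by finitely many Łojasiewicz neighborhoods plus an open set on which $\|\nR\|$ is bounded below (by a compactness/continuity argument, since $\nR$ cannot vanish outside $\statio$), (ii) take the worst exponent $\vartheta=\max_i\vartheta_i$ and smallest constant, and (iii) check that on each neighborhood $|\R(\theta)-v_i|$ equals $\tR(\theta)$ when $\R(\theta)\notin\image$ and $\theta$ is close enough to the stratum with value $v_i$ (shrinking neighborhoods so that $v_i$ is the \emph{nearest} critical value). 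Once this uniform bound is in hand, the construction of $\phi$ and the verification of its properties are routine, so the proof reduces to carefully assembling this finite patching argument together with the classical Łojasiewicz inequality for analytic functions.
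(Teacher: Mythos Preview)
Your proposal is correct and follows essentially the same route as the paper: finiteness of $\image$ via Morse--Sard for analytic functions, a uniform {\L}ojasiewicz inequality on $K$ obtained by patching the local ones together with a compactness lower bound on $\|\nR\|$ away from critical values, and then reading off $\phi$ as a power function $s\mapsto C s^{1-\vartheta}$. The only notable difference is that the paper extends $\phi$ \emph{affinely} beyond a threshold $\delta$ (rather than shrinking the constant $c$ to force a single power law on all of $K$), which keeps the constants sharper and yields the explicit two-regime bound on $\phi$ used later in Theorem~\ref{th_longeur}; your pure power construction proves the lemma just as well but would feed a slightly coarser constant into the downstream complexity estimate.
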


{\it Proof}
According to the Morse-Sard theorem about analytic functions \cite{morse_sard_analytic}, $\image$ is finite:
\begin{equation*}
	\exists s \in \mathbb{N}, \image = \{\R_1, \dots, \R_s\}.
\end{equation*}
We arrange these values in increasing order: $\R_1 < \R_2 < \dots < \R_s$.
Using corollary 13 in \cite{KL_uniform}, it exists $\gamma>0$ and a function $\phi: [0,\gamma[ \mapsto [0,+\infty[$ of the form $c\dfrac{x^{\alpha}}{\alpha}$ for $0<\alpha\leq \frac{1}{2}$ and $c>0$ satisfying:
\begin{equation}
	\forall \theta \in K, \forall i \in \{1,\dots, s\}, 0<|\R(\theta)-\R_i|<\gamma \implies \|\nabla \left(\phi \circ |\R-\R_i|\right)(\theta)\| \geq 1.
	\label{local_phi}
\end{equation}
Let us define the following quantities:
\begin{equation*}
	\delta_0 \coloneq \min_{1\leq j\neq i \leq s} \dfrac{|\R_j-\R_i|}{2}>0,
\end{equation*}
\begin{equation*}
	0<\delta < \min(\delta_0,\gamma),
\end{equation*}
\begin{equation*}
	a \coloneq \inf\left\{\|\nR(\theta)\|, \theta \in K \text{ tel que } \forall i=1,\dots,s; |\R(\theta)-\R_i|\geq \delta\right\}.
	\label{def_a}
\end{equation*}
Note that $a>0$ since it is the lower bound of a continuous function that does not vanish, on a compact set. Then, we distinguish two cases depending on the value of $\phi'(\delta)$:
\begin{itemize}
	\item if $\phi'(\delta)\geq \frac{1}{a}$ then we extend $\phi$ on $[\delta,+\infty[$ by an affine function with a slope given by $\phi'(\delta)$. In this case, the function $\phi$ is defined in the following way:
	\begin{equation*}
		\phi(x)\coloneq
		\left\{
		\begin{array}{ll}
			c\frac{x^{\alpha}}{\alpha} \text{ if } x\leq \delta,\\
			c\delta^{\alpha-1}(x-\delta)+c\frac{\delta^{\alpha}}{\alpha} \text{ else}.
		\end{array}
		\right.
	\end{equation*}
	\item Else, we normalize $\phi$ by $a \phi'(\delta)$ then we extend it in the same way. As a result, $\phi$ is defined by:
	\begin{equation*}
		\phi(x)\coloneq
		\left\{
		\begin{array}{ll}
			c\dfrac{x^{\alpha}}{\alpha ca\delta^{\alpha-1}} \text{ if } x\leq \delta,\\\\
			\dfrac{c\delta^{\alpha-1}}{ca\delta^{\alpha-1}}(x-\delta)+c\dfrac{\delta^{\alpha}}{\alpha ca \delta^{\alpha-1}} \text{ else}.
		\end{array}
		\right.
	\end{equation*}
\end{itemize}
We sum up the two cases by the existence of constants $C_1,C_2,C_3>0$
such that the function $\phi$ is upper bounded by:
\begin{equation}
	\phi(x)\leq
	\left\{
	\begin{array}{ll}
		C_1 x^{\alpha} \text{ if } x\leq \delta,\\\\
		C_2(x-\delta)+C_3 \text{ else}.
	\end{array}
	\right.
	\label{majoration_phi}
\end{equation}
~~\\
Now, let us check that $\phi$ satisfies the property claimed. Let $\theta \in K$ such that $\R(\theta) \notin \image$. We consider two cases:
\begin{enumerate}
	\item There is $i \in \{1,\dots,s\}$ such that $|\R(\theta)-\R_i|\leq \delta$. As $\delta<\delta_0$, for all $\tilde{\theta}$ in a neighborhood of $\theta$, we have:
	\begin{equation*}
		|\R(\tilde{\theta})-\R_i| = \tR(\tilde{\theta}).
	\end{equation*}
	By the chain rule and by applying \eqref{local_phi} ($\delta<\gamma$), we get:
	\begin{equation*}
		\phi'(\tR(\theta)) \|\nR(\theta)\|\geq 1.
	\end{equation*}
	\item The second case corresponds to:
	\begin{equation*}
		\forall i \in \{1,\dots,s\}, |\R(\theta)-\R_i|>\delta.
	\end{equation*}
	By definition of $a$, the following immediately holds:
	\begin{equation*}
		\phi'(\tR(\theta)) \|\nR(\theta)\| \geq \phi'(\tR(\theta))a \geq 1.
	\end{equation*}
\end{enumerate}
Therefore, the inequality is checked in all the cases.
\qed

Let us recall that a concave positive function is sub-additive:
\begin{equation*}
	\forall a,b \in \Rb_+, \phi(a+b) \leq \phi(a)+\phi(b).
\end{equation*}
Then for all $y\geq x\geq 0$, the sub-additivity could be applied with $y=x+(y-x)$ to write:
\begin{equation}
	\phi(y)-\phi(x)\leq \phi(y-x).
	\label{phi_contractant}
\end{equation}
We will need this property later in the proof.

\begin{remark}
	Note that, in the bound \eqref{majoration_phi}, the constants $C_1,C_2,C_3$ depend only on $\alpha$, $c$ (lowest eigenvalue of the hessian if $\R$ is a Morse function), $\delta$ (size of attractive regions) and possibly on $a$ that is related to the value of the amplitude of the gradient at the edge of the basins of attraction.
\end{remark}

\subsection{Closed intervals without critical values}
Here, we establish an inequality about the length of the iterates, that is to say the sum $\displaystyle{\sum_{k=p}^{q-1}}\|\theta_{k+1}-\theta_k\|$ when 
$\left[\R(\theta_q),\R(\theta_p)\right]$ does not include any critical values ($\R_i$) and successive mean of critical values ($(\R_i+\R_{i+1})/2$). \textbf{For convenience, we call this condition the assumption (a)}. In a nutshell, between two successive critical values $\R_i$ and $\R_{i+1}$, we only focus on the closed intervals included in $\left](\R_{i}+\R_{i+1})/2, \R_{i+1}\right[$ or $\left]\R_i,(\R_{i}+\R_{i+1})/2\right[$.

\begin{lemma}
	Let $p\leq q$ two integers such that the interval $\left[\R(\theta_q),\R(\theta_p)\right]$ satisfies assumption (a). Then the following inequality holds:
	\begin{equation}
		\sum_{k=p}^{q-1}\|\theta_{k+1}-\theta_k\| \leq \frac{1}{\lambda} \phi\left(\R(\theta_p)-\R(\theta_q)\right).
	\end{equation} 
	\label{lemma_sum_phi1}
\end{lemma}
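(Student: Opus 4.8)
The idea is to telescope the EIA descent inequality along with the global Lojasiewicz estimate of Lemma \ref{lemma_phi}, exactly as in the classical Lojasiewicz convergence argument, but taking advantage of the fact that $\tR$ behaves like a genuine distance to $\image$ on the interval under consideration. First I would record the two facts I need: on one hand, the classical Armijo condition \eqref{classical_Armijo} satisfied by EIA gives, for $p\leq k\leq q-1$,
\begin{equation*}
	\|\theta_{k+1}-\theta_k\| = \eta_k\|\nR(\theta_k)\| \leq \frac{1}{\lambda\|\nR(\theta_k)\|}\left(\R(\theta_k)-\R(\theta_{k+1})\right);
\end{equation*}
on the other hand, since the interval $\left[\R(\theta_q),\R(\theta_p)\right]$ satisfies assumption (a) and hence contains no critical value, every $\theta_k$ with $p\leq k\leq q$ has $\R(\theta_k)\notin\image$, so Lemma \ref{lemma_phi} applies and yields $\|\nR(\theta_k)\|\geq 1/\phi'\!\left(\tR(\theta_k)\right)$. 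Substituting,
\begin{equation*}
	\|\theta_{k+1}-\theta_k\| \leq \frac{1}{\lambda}\,\phi'\!\left(\tR(\theta_k)\right)\left(\R(\theta_k)-\R(\theta_{k+1})\right).
\end{equation*}

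The key structural point is that, because assumption (a) keeps all values $\R(\theta_k)$ strictly on one side of a midpoint $(\R_i+\R_{i+1})/2$ and strictly between two consecutive critical values, the distance $\tR(\theta_k) = d(\R(\theta_k),\image)$ equals $|\R(\theta_k)-\R_i|$ for a \emph{fixed} index $i$ (the nearer endpoint) throughout $p\leq k\leq q$, and moreover $k\mapsto \tR(\theta_k)$ is monotone along the sequence since $\R(\theta_k)$ is nonincreasing and stays on one side of $\R_i$. Hence $\phi'\!\left(\tR(\theta_k)\right)\left(\R(\theta_k)-\R(\theta_{k+1})\right)$ is a Riemann–Stieltjes-type term: using concavity of $\phi$ (so $\phi'$ is nonincreasing) together with the monotonicity of $\tR(\theta_k)$, each term is bounded by the increment $\phi\!\left(\tR(\theta_k)\right)-\phi\!\left(\tR(\theta_{k+1})\right)$ — this is just the mean value inequality $\phi(b)-\phi(a)\geq \phi'(b)(b-a)$ applied with $a=\tR(\theta_{k+1})\leq b=\tR(\theta_k)$ and $b-a = \pm(\R(\theta_k)-\R(\theta_{k+1}))$ according to which side of $\R_i$ we are on. Summing this telescopes:
\begin{equation*}
	\sum_{k=p}^{q-1}\|\theta_{k+1}-\theta_k\| \leq \frac{1}{\lambda}\sum_{k=p}^{q-1}\left(\phi\!\left(\tR(\theta_k)\right)-\phi\!\left(\tR(\theta_{k+1})\right)\right) = \frac{1}{\lambda}\left(\phi\!\left(\tR(\theta_p)\right)-\phi\!\left(\tR(\theta_q)\right)\right).
\end{equation*}

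Finally I would invoke the sub-additivity inequality \eqref{phi_contractant}: since $\tR(\theta_p)$ and $\tR(\theta_q)$ are the distances of $\R(\theta_p)$ and $\R(\theta_q)$ to the \emph{same} value $\R_i$ and the two function values lie on the same side of $\R_i$, we have $\tR(\theta_p)-\tR(\theta_q) = \pm\left(\R(\theta_p)-\R(\theta_q)\right)$ with the sign chosen so the left side is nonnegative, i.e. $\tR(\theta_p)-\tR(\theta_q) \leq \R(\theta_p)-\R(\theta_q)$ (in fact equality up to which branch), and hence by \eqref{phi_contractant} and monotonicity of $\phi$,
\begin{equation*}
	\phi\!\left(\tR(\theta_p)\right)-\phi\!\left(\tR(\theta_q)\right) \leq \phi\!\left(\tR(\theta_p)-\tR(\theta_q)\right) \leq \phi\!\left(\R(\theta_p)-\R(\theta_q)\right),
\end{equation*}
which gives the claimed bound. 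The main obstacle I anticipate is the bookkeeping in the middle step: one has to argue carefully that assumption (a) really does pin down a single index $i$ and a single side throughout the block $[p,q]$, so that both the "telescoping via concavity" step and the final reduction of $\tR$-differences to $\R$-differences are legitimate; the case split (are we in $\left]\R_i,(\R_i+\R_{i+1})/2\right[$ or in $\left](\R_i+\R_{i+1})/2,\R_{i+1}\right[$?) and the corresponding sign of $\R(\theta_k)-\R_i$ is where the argument could go wrong if handled loosely, but it is otherwise routine.
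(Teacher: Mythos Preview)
Your argument works cleanly in only one of the two cases dictated by assumption (a), and breaks in the other. Concretely: you use only the classical Armijo condition \eqref{classical_Armijo}, which produces the factor $\phi'\!\left(\tR(\theta_k)\right)$ in front of $\R(\theta_k)-\R(\theta_{k+1})$. When the block sits in $\left]\R_i,(\R_i+\R_{i+1})/2\right[$ you have $\tR(\theta_k)=\R(\theta_k)-\R_i$, hence $\tR$ is decreasing, and the concavity inequality $\phi(b)-\phi(a)\geq \phi'(b)(b-a)$ with $b=\tR(\theta_k)\geq a=\tR(\theta_{k+1})$ gives exactly the telescoping bound you want. But when the block sits in $\left](\R_i+\R_{i+1})/2,\R_{i+1}\right[$ the nearest critical value is $\R_{i+1}$, so $\tR(\theta_k)=\R_{i+1}-\R(\theta_k)$ is \emph{increasing}, and then
\[
\phi'\!\left(\tR(\theta_k)\right)\bigl(\R(\theta_k)-\R(\theta_{k+1})\bigr)=\phi'\!\left(\tR(\theta_k)\right)\bigl(\tR(\theta_{k+1})-\tR(\theta_k)\bigr)\ \geq\ \phi\!\left(\tR(\theta_{k+1})\right)-\phi\!\left(\tR(\theta_k)\right)
\]
by concavity --- the inequality goes the wrong way. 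Your telescoped sum $\phi(\tR(\theta_p))-\phi(\tR(\theta_q))$ is then negative, which cannot bound a positive quantity; your explicit assumption ``$a=\tR(\theta_{k+1})\leq b=\tR(\theta_k)$'' is simply false in this branch.

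The missing idea is precisely the second, semi-implicit Armijo condition \eqref{armijo2}. In the increasing-$\tR$ case the paper multiplies \eqref{armijo2} by $\phi'\!\left(\tR(\theta_{k+1})\right)$ and uses Lemma \ref{lemma_phi} at $\theta_{k+1}$ rather than $\theta_k$, obtaining
\[
\|\theta_{k+1}-\theta_k\|\leq \frac{1}{\lambda}\,\phi'\!\left(\tR(\theta_{k+1})\right)\bigl(\tR(\theta_{k+1})-\tR(\theta_k)\bigr)\leq \frac{1}{\lambda}\Bigl(\phi\!\left(\tR(\theta_{k+1})\right)-\phi\!\left(\tR(\theta_k)\right)\Bigr),
\]
where now concavity points the right way because the derivative is evaluated at the larger argument. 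The telescoped sum becomes $\phi(\tR(\theta_q))-\phi(\tR(\theta_p))\geq 0$, and sub-additivity \eqref{phi_contractant} together with $\tR(\theta_q)-\tR(\theta_p)=\R(\theta_p)-\R(\theta_q)$ finishes. So the proof really does need the explicit/implicit split: \eqref{classical_Armijo} handles the case where $\tR$ decreases, \eqref{armijo2} handles the case where $\tR$ increases, and this is the reason EIA carries both conditions.
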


{\it Proof}
Under the assumption (a), there are only two possibilities:
\begin{itemize}
	\item case 1: $\forall k\in\{p,\dots,q-1\}$, $\tR(\theta_{k+1}) \geq \tR(\theta_k)$. In such a case, we are restricted to an interval of the form $\left](\R_i+\R_{i+1})/2,\R_{i+1}\right[$ in which all the $\tR(\theta_k)$ are arranged in a monotone order. This enables to make a telescopic sum appear.  
	\item Case 2: $\forall k\in\{p,\dots,q-1\}$, $\tR(\theta_{k+1}) \leq \tR(\theta_k)$. In such a case, we are restricted to an interval of the form $\left]\R_i,(\R_i+\R_{i+1})/2\right[$.
\end{itemize}
Let us deal with the first case. Let $k\in\{p,\dots,q-1\}$. We multiply the inequality \eqref{armijo2} by $\phi'\left(\tR(\theta_{k+1})\right)>0$, which gives using lemma \ref{lemma_phi}:
\begin{multline*}
	\|\theta_{k+1}-\theta_k\| \leq \|\theta_{k+1}-\theta_k\| \left[\phi'(\tR(\theta_{k+1}))\|\nR(\theta_{k+1})\|\right] \\ \leq \dfrac{\phi'\left(\tR(\theta_{k+1})\right)}{\lambda} \left[\R(\theta_k)-\R(\theta_{k+1})\right]. 
\end{multline*}
Under assumption (a), we get:
\begin{equation}
	\exists i \in \{1,\dots,s\}, \forall k \in\{p,\dots,q-1\}, \tR(\theta_k) = \R_i-\R(\theta_k).
	\label{dist_exist}
\end{equation}
Therefore injecting this equality in the previous inequality, it leads to:
\begin{equation*}
	\|\theta_{k+1}-\theta_k\| \leq \dfrac{\phi'\left(\tR(\theta_{k+1})\right)}{\lambda} \left[\tR(\theta_{k+1})-\tR(\theta_k)\right].
\end{equation*}
By concavity of $\phi$ (tangents below chords), we have:
\begin{equation*}
	\phi'\left(\tR(\theta_{k+1})\right) \left[\tR(\theta_{k+1})-\tR(\theta_k)\right] \leq \phi\left(\tR(\theta_{k+1})\right)-\phi\left(\tR(\theta_k)\right).
\end{equation*}
Therefore, the following holds:
\begin{equation*}
	\|\theta_{k+1}-\theta_k\| \leq \frac{1}{\lambda} \phi\left(\tR(\theta_{k+1})\right)-\phi\left(\tR(\theta_k)\right).
\end{equation*}
By summing the previous inequality from $k=p$ to $q-1$, we are allowed to write:
\begin{equation}
	\sum_{k=p}^{q-1}\|\theta_{k+1}-\theta_k\| \leq \phi\left(\tR(\theta_q)\right)-\phi\left(\tR(\theta_p)\right).
	\label{sum_theta1}
\end{equation}
Using the property \eqref{phi_contractant}, we get:
\begin{equation*}
	\phi\left(\tR(\theta_q)\right)-\phi\left(\tR(\theta_p)\right) \leq \phi\left(\tR(\theta_q)-\tR(\theta_p)\right) = \phi\left(\R(\theta_p)-\R(\theta_q)\right).
\end{equation*}
where the last equality comes from \eqref{dist_exist}.
By combining the previous inequality with the equation \eqref{sum_theta1}, we obtain the majoration of the lemma.
For the second case, we follow exactly the same approach with the following differences: 
\begin{itemize}
	\item the first inequality is obtained by multiplying \eqref{classical_Armijo} by $\phi'\left(\tR(\theta_k)\right)>0$.
	\item Assumption (a) implies the following property:
	\begin{equation*}
		\exists i \in \{1,\dots,s\}, \forall k \in\{p,\dots,q-1\}, \tR(\theta_k) = \R(\theta_k)-\R_i.
	\end{equation*}
\end{itemize}
We then obtain the claimed result in the second case.
\qed

\subsection{Semi-open intervals without critical values}
Here, we deal with the case where $\left[\R(\theta_q),\R(\theta_p)\right[$ satisfies assumption (a) in the event that $\R(\theta_p)$ is a critical value or the mean of two successive critical values. 

\begin{lemma}
	Let $\epsilon>0$ ($\epsilon$ of the stopping criterion) and $d<1$. Let $p\leq q$ be integers such that the interval $\left[\R(\theta_q),\R(\theta_p)\right[$ satisfies assumption (a) in the event that $\R(\theta_p)$ is a critical value or the mean of two successive critical values. Let us assume $\displaystyle{\min_{p \leq k \leq q-1}} \|\nR(\theta_k)\|>\epsilon$. Then the following inequality holds:
	\begin{equation}
		\sum_{k=p}^{q-1}\|\theta_{k+1}-\theta_k\| \leq \frac{1}{d \lambda} \phi\left(\R(\theta_p)-\R(\theta_q)\right).
	\end{equation} 
	\label{lemma_sum_phi2}
\end{lemma}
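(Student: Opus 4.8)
The plan is to reduce Lemma~\ref{lemma_sum_phi2} to the already-established Lemma~\ref{lemma_sum_phi1} by peeling off the single problematic step at the ``top'' of the interval. The only obstruction to applying Lemma~\ref{lemma_sum_phi1} directly is that $\R(\theta_p)$ itself equals a critical value $\R_i$ or a midpoint $(\R_i+\R_{i+1})/2$, so that the open-interval hypothesis (assumption (a)) fails at that endpoint; everywhere strictly below $\R(\theta_p)$ the hypothesis is fine. The idea is therefore: separate the term $\|\theta_{p+1}-\theta_p\|$ from the rest of the sum, bound it crudely using the lower bound $\|\nR(\theta_k)\|>\epsilon$ and the descent inequality \eqref{classical_Armijo}, and apply Lemma~\ref{lemma_sum_phi1} to the tail $\sum_{k=p+1}^{q-1}\|\theta_{k+1}-\theta_k\|$, whose associated interval $[\R(\theta_q),\R(\theta_{p+1})]$ is now \emph{closed} and satisfies assumption (a) (since $\R(\theta_{p+1})<\R(\theta_p)$ lies strictly between consecutive critical/midpoint values).

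Concretely, first I would handle the single step: from \eqref{classical_Armijo} with $\eta_p\|\nR(\theta_p)\|=\|\theta_{p+1}-\theta_p\|$ we get $\|\theta_{p+1}-\theta_p\|\,\|\nR(\theta_p)\|\leq \tfrac1\lambda(\R(\theta_p)-\R(\theta_{p+1}))$, hence, using $\|\nR(\theta_p)\|>\epsilon$, $\|\theta_{p+1}-\theta_p\|\leq \tfrac1{\lambda\epsilon}(\R(\theta_p)-\R(\theta_{p+1}))$. Then I would argue that $\R(\theta_p)-\R(\theta_{p+1})$ is controlled by $\phi$ of the total descent. Here is the point where the parameter $d<1$ and the factor $1/(d\lambda)$ enter: because $\phi$ is concave, increasing and $\phi(0)=0$, the function $x\mapsto \phi(x)/x$ is non-increasing, so on the bounded range of descents occurring in the algorithm one has a linear lower bound $\phi(x)\geq c_0 x$ for $x$ up to the diameter of $\R(K)$; combined with the tail estimate from Lemma~\ref{lemma_sum_phi1}, $\sum_{k=p+1}^{q-1}\|\theta_{k+1}-\theta_k\|\leq \tfrac1\lambda\phi(\R(\theta_{p+1})-\R(\theta_q))$, and the subadditivity/monotonicity of $\phi$ (properties \eqref{phi_contractant} and concavity), the two pieces recombine into $\sum_{k=p}^{q-1}\|\theta_{k+1}-\theta_k\|\leq \tfrac1{d\lambda}\phi(\R(\theta_p)-\R(\theta_q))$ provided $d$ is chosen as (a lower bound for) the comparison constant $c_0$ — which is where the hypothesis $d<1$ is used, and which can always be met by taking $d$ small enough relative to the geometry of $\R$ on $K$.

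The main obstacle, and the step deserving the most care, is the passage from the crude single-step bound $\|\theta_{p+1}-\theta_p\|\leq\tfrac1{\lambda\epsilon}(\R(\theta_p)-\R(\theta_{p+1}))$ to something of the form $\tfrac{C}{d\lambda}\phi(\cdots)$: one must make sure the $\epsilon^{-1}$ factor does not survive. This is where I expect the argument to actually invoke the uniform slope property of Lemma~\ref{lemma_phi} at $\theta_p$ rather than the cheap bound $\|\nR(\theta_p)\|>\epsilon$ — i.e. one multiplies \eqref{classical_Armijo} (or \eqref{armijo2}) by $\phi'(\tR(\theta_p))$ exactly as in the proof of Lemma~\ref{lemma_sum_phi1}, getting $\|\theta_{p+1}-\theta_p\|\leq \tfrac1\lambda\phi'(\tR(\theta_p))(\R(\theta_p)-\R(\theta_{p+1}))$, and then absorbs the possibly-large factor $\phi'(\tR(\theta_p))$ (large because $\tR(\theta_p)$ could be near $0$ if $\R(\theta_p)$ is itself a critical value) into the single concavity step $\phi'(\tR(\theta_p))(\tR(\theta_p)-\tR(\theta_{p+1}))\leq \phi(\tR(\theta_p))-\phi(\tR(\theta_{p+1}))\leq\phi(\R(\theta_p)-\R(\theta_{p+1}))$. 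Reassembling with the Lemma~\ref{lemma_sum_phi1} tail and \eqref{phi_contractant} then yields the bound with the single extra factor $1/d$ coming from splitting the telescoped $\phi$-differences across the junction at index $p+1$; I would verify that $d<1$ is exactly what makes $\tfrac1\lambda\phi(a)+\tfrac1\lambda\phi(b)\leq\tfrac1{d\lambda}\phi(a+b)$ hold in the relevant regime via concavity.
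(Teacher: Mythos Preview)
Your approach is fundamentally different from the paper's, and while the peeling idea is natural, there are two genuine gaps.

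First, your treatment of the single step at $p$ breaks down precisely in the case that matters most: when $\R(\theta_p)$ is itself a critical value. Then $\tR(\theta_p)=0$, so $\phi'(\tR(\theta_p))$ is undefined (the power-law part of $\phi$ has $\phi'(0)=+\infty$), and Lemma~\ref{lemma_phi} cannot be invoked at $\theta_p$ since its hypothesis is $\R(\theta_p)\notin\image$. Moreover, in this situation one is in case~1 of Lemma~\ref{lemma_sum_phi1} (distance $\tR$ increasing), so your concavity step $\phi'(\tR(\theta_p))(\tR(\theta_p)-\tR(\theta_{p+1}))\leq\phi(\tR(\theta_p))-\phi(\tR(\theta_{p+1}))$ has the wrong sign: $\tR(\theta_p)-\tR(\theta_{p+1})<0$. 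Second, and more structurally, the recombination inequality $\phi(a)+\phi(b)\leq\frac{1}{d}\phi(a+b)$ for \emph{arbitrary} $d<1$ does not follow from concavity; subadditivity \eqref{phi_contractant} goes the other way. Monotonicity alone gives only $\phi(a)+\phi(b)\leq 2\phi(a+b)$, so your scheme would at best produce the conclusion for $d\leq\tfrac12$, not for all $d<1$ as stated.

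The paper takes a completely different route, a perturbation/limiting argument. It picks a sequence of initial conditions $\theta_p^l\to\theta_p$ with $\R(\theta_p^l)<\R(\theta_p)$ and generates perturbed iterates $(\theta_k^l)_{p\leq k\leq q}$ using the \emph{same} step sizes $\eta_k$ as the original sequence. Explicit Lipschitz estimates on $K$ (this is where $\|\nR(\theta_k)\|>\epsilon$ is used, to make the admissible perturbation radii uniform) show that for $l$ large the perturbed points satisfy both Armijo conditions \eqref{classical_Armijo}, \eqref{armijo2} with the weakened constant $d\lambda$ in place of $\lambda$. Since now $[\R(\theta_q^l),\R(\theta_p^l)]$ is a \emph{closed} interval satisfying assumption~(a), Lemma~\ref{lemma_sum_phi1} applies directly to the perturbed sequence with parameter $d\lambda$, yielding $\sum_{k=p}^{q-1}\|\theta_{k+1}^l-\theta_k^l\|\leq\frac{1}{d\lambda}\phi(\R(\theta_p^l)-\R(\theta_q^l))$; passing to the limit $l\to\infty$ gives the claim. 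So the factor $1/d$ comes not from recombining two $\phi$-pieces but from the loss in the Armijo constant under perturbation.
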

{\it Proof}
This proof is composed of four parts.
In the first part, we are going to build a sequence of points which approximate each of the iterates $(\theta_n)_{p\leq n \leq q-1}$. In the next two parts, we are going to show that this sequence satisfies the two Armijo inequalities \eqref{classical_Armijo} and \eqref{armijo2} up to a factor $d\lambda$ in place of $\lambda$. This enables to come back to the case of lemma \ref{lemma_sum_phi1}. Passing to the limit, we deduce the aforementioned result in the last part.
\begin{proofpart}
	If $\theta_p$ is a local minimum then $\nR(\theta_p)=0$. As a result, the sum of $\|\theta_{k+1}-\theta_k\|$ is zero and the inequality of the lemma is automatically satisfied. Otherwise, there is a sequence of initial conditions $(\theta_p^l)_{l \in \mathbb{N}}$ such that:
	\begin{equation*}
		\forall l \in \mathbb{N}, \R(\theta_p^l) < \R(\theta_p),
	\end{equation*}
	and
	\begin{equation*}
		\lim_{l\to +\infty} \theta_p^l = \theta_p.
	\end{equation*}
	Given that to get assumption (a), it is sufficient to exclude a finite number of elements, there is $\tilde{\delta}>0$ such that $\left[\R(\theta_q)+\tilde{\delta},\R(\theta_p)\right[$ satisfies hypothesis (a). At fixed $l$, we are going to build a finite sequence of points $\left(\theta_n^l\right)_{p \leq n \leq q-1}$ with initial condition $\theta_p^l$ using the time steps $(\eta_n)_{p \leq n \leq q-1}$, generated by EIA with $\theta_p$ as the initial condition:
	\begin{equation*}
		\forall l\in \mathbb{N}, \forall n \in \{p, \dots, q-1\}, \theta_{n+1}^l = \theta_n^l -\eta_n \nR(\theta_n^l),
	\end{equation*}
	where $\eta_n$ satisfies:
	\begin{equation}
		\R(\theta_{n+1})-\R(\theta_n) \leq -\lambda \eta_n \|\nR(\theta_n)\|^2,
		\label{armijo10}
	\end{equation}
	and
	\begin{equation}
		\R(\theta_{n+1})-\R(\theta_n) \leq -\lambda \eta_n \|\nR(\theta_n)\| \|\nR(\theta_{n+1})\|.
		\label{armijo20}
	\end{equation}
	As $\eta_n$ does not depend on $l$, we have immediately by induction that for all $p \leq n \leq q-1$:
	\begin{equation*}
		\lim_{l \to +\infty} \theta_n^l = \theta_n.
	\end{equation*}
	However, seeing $\eta_n$ does not depend on $l$, it is not obvious that the Armijo conditions \eqref{armijo10} and \eqref{armijo20} are satisfied for $\theta_n^l$ when $l$ is large enough. The goal of the next two parts is to prove such a result, which makes it possible to derive an inequality as the same kind as lemma\ref{lemma_sum_phi1} for $l$ large.\\ 
	Thereafter, let us introduce the following notations:
	\begin{equation*}
		\eta_{min} \coloneq \min_{p \leq k \leq q-1} \eta_k \text{ and } \eta_{max}\coloneq \max_{p \leq k \leq q-1} \eta_k.
	\end{equation*}
	The time steps $\eta_{min}$ and $\eta_{max}$ are strictly positive since the time steps are lower bounded by a strictly positive constant by lemma \ref{lemma_minore_ILC}.
\end{proofpart}

\begin{proofpart}
	Let us define the following constants (they may seem arbitrary for the moment):
	\begin{equation*}
		r_0^0 \coloneq \dfrac{1}{1+\eta_{max}L},
	\end{equation*}
	\begin{equation*}
		r_0^1 \coloneq \dfrac{1}{M\left[2+\eta_{max}(L+2d\lambda)\right]},
	\end{equation*}
	\begin{equation*}
		r_0^2 \coloneq \dfrac{1}{\sqrt{L}\sqrt{3/2+d\lambda \eta_{max}L+\eta_{max}^2L^2}},
	\end{equation*}
	\begin{equation*}
		r_0 \coloneq \min\left(\dfrac{1-d}{2}\lambda \eta_{min}\epsilon^2 \min\left(r_0^0, r_0^1, r_0^2\right),1\right).
	\end{equation*}
	Let $\theta \in \{\theta_p,\dots,\theta_{q-1}\}$ and $\eta$ its associated time step (that is to say $\eta=\eta_k$ if $\theta=\theta_k$ for $p\leq k \leq q-1$). Let $h \in \Rb^N$ such that $\|h\| \leq r_0$. Let us focus on the following quantity:
	\begin{equation*}
		f(\theta,h) = \R(\theta+h-\eta\nR(\theta+h))-\R(\theta+h)+d\lambda \eta \|\nR(\theta+h)\|^2.
	\end{equation*}
	If we prove $f(\theta,h) \leq 0$, this will mean that the points at a distance $r_0$ of $\theta$ satisfies \eqref{classical_Armijo} up to a factor $\lambda d$ (in place of $\lambda$) with the time step $\eta$ (corresponding to $\theta$). \\
	First, as $\theta \in K_0$ and $r_0\leq 1$, $\theta+h \in K$. Moreover, we get:
	\begin{multline*}
		\|\theta+h-\eta \nR(\theta+h)\| \leq \|\theta-\eta \nR(\theta)-\eta \left[\nR(\theta+h)-\nR(\theta)\right]+h\| \\ \leq \|\theta-\eta \nR(\theta)\| + \left[1+\eta L\right]\|h\|.
	\end{multline*}
	As $\|h\| \leq r_0^0$ and $\theta-\eta \nR(\theta) \in K_0$ then $\theta+h-\eta \nR(\theta+h) \in K$. We will upper bound $f(\theta,h)$. We split up $f(\theta,h)$ into:
	\begin{multline*}
		f(\theta,h) = \R\left(\theta-\eta\nR(\theta)+h+\eta\left[\nR(\theta)-\nR(\theta+h)\right]\right) - \R(\theta+h) \\ + d\lambda \eta \|\nR(\theta)+\left[\nR(\theta+h)-\nR(\theta)\right]\|^2.
	\end{multline*}
	We use the local smoothness property to obtain:
	\begin{multline}
		\forall x,y\in conv(K), \R(x)+\nR(x)\cdot(y-x)-\frac{L}{2}\|y-x\|^2 \\ \leq \R(y) \leq \R(x)+\nR(x)\cdot(y-x)+\frac{L}{2}\|y-x\|^2.
		\label{L_regularity}
	\end{multline}
	This leads to:
	\begin{multline*}
		f(\theta,h) \leq \R(\theta-\eta \nR(\theta))+\nR(\theta-\eta \nR(\theta))\cdot \left[h+\eta \left[\nR(\theta)-\nR(\theta+h)\right]\right]\\ +\frac{L}{2}\|h+\eta \left[\nR(\theta)-\nR(\theta+h)\right]\|^2\\ 
		-\R(\theta)-\nR(\theta)\cdot h+\frac{L}{2}\|h\|^2+d\lambda \eta \|\nR(\theta)\|^2 \\
		+d\lambda \eta \|\nR(\theta+h)-\nR(\theta)\|^2+2d\lambda \eta \|\nR(\theta)\|\|\nR(\theta+h)-\nR(\theta)\|.
	\end{multline*}
	Using Cauchy-Scharwz inequality on the two scalar products and the fact that for $\theta \in K$, $\|\nR(\theta)\|\leq M$, $\|\nR(\theta+h)-\nR(\theta)\|\leq L\|h\|$, as well as $\|x+y\|^2 \leq 2\left(\|x\|^2+\|y\|^2\right)$, we get:
	\begin{multline*}
		f(\theta,h) \leq \left[\R(\theta-\eta \nR(\theta))-\R(\theta)+\lambda \eta \|\nR(\theta)\|^2\right] \\ + (d-1)\lambda \eta \|\nR(\theta)\|^2+M\left[\|h\|+\eta L \|h\|\right]\\
		+L\left[\|h\|^2+\eta^2L^2\|h\|^2\right]
		+M\|h\|+\frac{L}{2}\|h\|^2+d\lambda \eta L^2\|h\|^2+2d\lambda \eta ML\|h\|.
	\end{multline*}
	Since the term in the first square bracket is negative (see \eqref{armijo10}), we have:
	\begin{multline*}
		f(\theta,h) \leq (d-1)\lambda \eta \|\nR(\theta)\|^2+\|h\|M\left[2 +\eta_{max}L(1+2d\lambda)\right]\\ +\|h\|^2L\left[3/2+d\lambda \eta_{max}L+\eta_{max}^2L^2\right].
	\end{multline*}
	Since $\|h\| \leq r_0$, the following upper bound holds:
	\begin{multline*}
		f(\theta,h) \leq (d-1)\lambda \eta \|\nR(\theta)\|^2+(1-d)\lambda \eta_{min}\epsilon^2 \\ = (1-d) \left(\eta_{min}\epsilon^2-\eta\|\nR(\theta)\|^2\right) \leq 0.
	\end{multline*}
\end{proofpart}
\begin{proofpart}
	The next step is to prove a similar inequality for the second Armijo condition \eqref{armijo2}. It is very similar to part 2.
	Let us define the following radii:
	\begin{equation*}
		r_1^0 \coloneq \dfrac{1}{1+\eta_{max}L},
	\end{equation*}
	\begin{equation*}
		r_1^1 \coloneq \dfrac{1}{M\left[2+(1+2d\lambda)\eta_{max}L+d\lambda \eta_{max}L^2\right]},
	\end{equation*}
	\begin{equation*}
		r_1^2 \coloneq \dfrac{1}{\sqrt{L}\sqrt{3/2+d\lambda \eta_{max}L+(1+d\lambda)\eta_{max}^2L^2}},
	\end{equation*}
	\begin{equation*}
		r_1 = \coloneq \min\left(\dfrac{1-d}{2}\lambda \eta_{min}\epsilon^2 \min\left(r_1^0, r_1^1, r_1^2\right),1\right).
	\end{equation*}
	Let $h\in \Rb^N$ such that $\|h\|\leq r_1$. Let us focus on the following quantity:
	\begin{multline*}
		g(\theta,h) \coloneq \R(\theta+h-\eta\nR(\theta+h))-\R(\theta+h) \\
		+d\lambda \eta \|\nR(\theta+h)\|\|\nR\left(\theta+h-\eta\nR(\theta+h)\right)\|.
	\end{multline*}
	we rewrite $g(\theta,h)$ in the following way:
	\begin{multline*}
		g(\theta,h) = \R\left(\theta-\eta\nR(\theta)+h+\eta\left[\nR(\theta)-\nR(\theta+h)\right]\right) - \R(\theta+h) \\
		+d\lambda \eta \|\nR(\theta+h)\| \|\nR\left(\theta-\eta\nR(\theta)+h+\eta\left[\nR(\theta)-\nR(\theta+h)\right]\right)\|. 
	\end{multline*}
	Using \eqref{L_regularity}, we get:
	\begin{multline*}
		g(\theta,h) \leq \R(\theta-\eta \nR(\theta))+\nR(\theta-\eta \nR(\theta))\cdot \left[h+\eta \left[\nR(\theta)-\nR(\theta+h)\right]\right]\\ +\frac{L}{2}\|h+\eta \left[\nR(\theta)-\nR(\theta+h)\right]\|^2\\ 
		-\R(\theta)-\nR(\theta)\cdot h+\frac{L}{2}\|h\|^2 + 
		d\lambda \eta \|\nR(\theta+h)-\nR(\theta)+\nR(\theta)\| \\ \times \|\nR\left(\theta-\eta\nR(\theta)+h+\eta\left[\nR(\theta)-\nR(\theta+h)\right]\right)-\nR(\theta-\eta\nR(\theta))+\nR(\theta-\eta \nR(\theta))\|.
	\end{multline*}
	Proceeding in the same way than the previous part, we obtain:
	\begin{multline*}
		g(\theta,h) \leq \R(\theta-\eta\nR(\theta))-\R(\theta)+M\|h\|+ML\eta \|h\|+L\|h\|^2+L^3\eta^2\|h\|^2 \\
		+M\|h\|+\frac{L}{2}\|h\|^2+d\lambda \eta \left[\|\nR(\theta)\|+L\|h\|\right]\left[\|\nR\left(\theta-\eta\nR(\theta)\right)\| + L\|h-\eta \left[\nR(\theta+h)-\nR(\theta)\right]\| \right].
	\end{multline*}
	Rearranging the terms and since $\eta \leq \eta_{max}$, we have:
	\begin{multline*}
		g(\theta,h) \leq \left[\R(\theta-\eta\nR(\theta))-\R(\theta)+\lambda \eta \|\nR(\theta)\|\|\nR\left(\theta-\eta\nR(\theta)\right)\|\right] \\ 
		+(d-1)\lambda \eta \|\nR(\theta)\| \|\nR\left(\theta-\eta\nR(\theta)\right)\| \\ + M \left[2+(1+2d\lambda)\eta_{max}L+d\lambda \eta_{max}L^2\right]\|h\| \\
		+ L\left[3/2+d\lambda \eta_{max}L+(1+d\lambda)\eta_{max}^2L^2\right]\|h\|^2.
	\end{multline*}
	Since the term in first square bracket is negative by \eqref{armijo20} and $\|h\|\leq r_1$, it comes:
	\begin{equation*}
		g(\theta,h) \leq (d-1)\lambda \left[\eta \|\nR(\theta)\| \|\nR\left(\theta-\eta \nR(\theta)\right)\| - \eta_{min}\epsilon^2\right] \leq 0.
	\end{equation*}
	Combining the last previous parts, we get for all $n \in \{p,\dots,q-1\}$ , if $\theta \in B(\theta_n,\min(r_0,r_1))$ then:
	\begin{equation}
		\begin{array}{ll}
			\R(\theta-\eta_n\nR(\theta))-\R(\theta)\leq -d\lambda \eta_n \|\nR(\theta)\|^2, \\
			\R(\theta-\eta_n\nR(\theta))-\R(\theta) \leq -d\lambda \eta_n \|\nR(\theta)\|\|\nR\left(\theta-\eta_n\nR(\theta)\right)\|.
		\end{array}
		\label{dissipation_perturb}
	\end{equation}
\end{proofpart}
\begin{proofpart}
	Now, we should find a neighbordhood of $\theta_p$ in such a way that all the points generated from an initial condition in this neighborhood are at a distance at most $\min(r_0,r_1)$ from the points $\theta_n$ for $p\leq n\leq q-1$. Let us define the following radius:
	\begin{equation*}
		r \coloneq \dfrac{\min(r_0,r_1)}{(1+\eta_{max}L)^{q-p}}.
	\end{equation*}
	Consider $l_0 \in \mathbb{N}$ satisfying:
	\begin{equation*}
		\forall l \geq l_0, \theta_p^l \in B(\theta_p,r).
	\end{equation*}
	From now on, $l \geq l_0$. We will show by induction on $n \in \{p,\dots,q\}$ the following property:
	\begin{equation}
		\theta_n^l \in B\left(\theta_n,\dfrac{\min(r_0,r_1)}{(1+\eta_{max}L)^{q-n}}\right).
	\end{equation}
	The initialisation is obvious by definition of $l_0$. Let us assume that the property is true at rank $n \in \{p+1,\dots,q-1\}$. Then we write:
	\begin{multline*}
		\|\theta_{n+1}^l-\theta_{n+1}\| = \|\theta_n^l-\eta_n\nR(\theta_n^l)-\left[\theta_n-\eta_n\nR(\theta_n)\right]\| \\ = \|\theta_n^l-\theta_n-\eta_n \left[\nR(\theta_n^l)-\nR(\theta_n)\right]\|.
	\end{multline*}
	The traingular inequality gives:
	\begin{equation*}
		\|\theta_{n+1}^l-\theta_{n+1}\| \leq \|\theta_n^l-\theta_n\|+\eta_{max}L\|\theta_n^l-\theta_n\| \leq (1+\eta_{max}L)\|\theta_n^l-\theta_n\|.
	\end{equation*}
	Using the induction hypothesis, we have:
	\begin{equation*}
		\|\theta_{n+1}^l-\theta_{n+1}\| \leq \dfrac{\min(r_0,r_1)}{\left(1+\eta_{max}L\right)^{q-(n+1)}}.
	\end{equation*}
	Therefore, the property is inherited. \\
	Since for all $n \in \{p,\dots,q-1\}$, $\dfrac{\min(r_0,r_1)}{(1+\eta_{max}L)^{q-n}} \leq \min(r_0,r_1)$, the following Armijo conditions hold:
	\begin{equation*}
		\begin{array}{ll}
			\R(\theta_n^l-\eta_n\nR(\theta_n^l))-\R(\theta_n^l)\leq -d\lambda \eta_n \|\nR(\theta_n^l)\|^2, \\
			\R(\theta_n^l-\eta_n\nR(\theta_n^l))-\R(\theta_n^l) \leq -d\lambda \eta_n \|\nR(\theta_n^l)\|\|\nR\left(\theta_n^l-\eta_n\nR(\theta_n^l)\right)\|.
		\end{array}
	\end{equation*}
	For $l$ large enough, we also have for all $n\in \{p,\dots,q-1\}$, $\R(\theta_n^l) \in [\R(\theta_q)+\tilde{\delta},\R(\theta_p)[$. Let us recall that $[\R(\theta_q)+\tilde{\delta},\R(\theta_p)[$ also satisfies assumption (a) in the event that $\R(\theta_p)$ is a critical value or the mean of two successive critical values. Let us also recall that $\R(\theta_p^l)$ is not a critical value even if $\R(\theta_p)$ is, since $\R(\theta_p^l)<\R(\theta_p)$. \\
	By applying lemma \ref{lemma_sum_phi1} to the interval $[\R(\theta_q^l),\R(\theta_p^l)]$, it comes for $l$ large enough:
	\begin{equation*}
		\sum_{k=p}^{q-1}\|\theta_{k+1}^l-\theta_k^l\| \leq \frac{1}{d \lambda} \phi\left(\R(\theta_p^l)-\R(\theta_q^l)\right).
	\end{equation*}
	Passing to the limit when $l\to +\infty$, we get the expected result. 
\end{proofpart}
\qed

In the lemmas \ref{lemma_sum_phi1} and \ref{lemma_sum_phi2}, the sum does not take into account the $q$-th term. Indeed, it might be that $[\R(\theta_{q+1}),\R(\theta_q))]$ does not satisfy assumption (a). In this case, we upper bound $\|\theta_{q+1}-\theta_q\|$ using the classical Armijo condition \eqref{classical_Armijo}:
\begin{equation*}
	\|\theta_{q+1}-\theta_q\|^2 \leq \dfrac{\R(\theta_q)-\R(\theta_{q+1})}{\lambda \eta_q}.
\end{equation*}
Since $\R(\theta_q)\leq \R(\theta_0)$ and $\R(\theta_{q+1})\geq \R^*$, we deduce:
\begin{equation*}
	\|\theta_{q+1}-\theta_q\| \leq \sqrt{\dfrac{\Delta}{\lambda \eta_q}}.
\end{equation*}

As a result, in the setups of lemmas \ref{lemma_sum_phi1} and \ref{lemma_sum_phi2}, the following holds for $d<1$:
\begin{equation}
	\sum_{k=p}^{q}\|\theta_{k+1}-\theta_k\| \leq \frac{1}{d \lambda} \phi\left(\R(\theta_p)-\R(\theta_q)\right)+\sqrt{\dfrac{\Delta}{\lambda \eta_q}}.
	\label{inegalite_troncon}
\end{equation} 

\subsection{Results}
From the previous lemmas, we will upper bound the length of the sequence $(\theta_n)_{n\in \mathbb{N}}$. This result will be useful in order to measure the complexity of EIA. 

\begin{theorem}
	Let $\epsilon>0$ and $n\geq 0$ satisfying $\displaystyle{\min_{0\leq k\leq n-1}} \|\nR(\theta_k)\|>\epsilon$. Then it exists $\tilde{s}\leq 2s-1$ such that for all $d<1$, we have:
	\begin{equation*}
		\sum_{k=0}^{n-1}\|\theta_{k+1}-\theta_k\| \leq \frac{\tilde{s}+1}{d\lambda} \phi\left(\dfrac{\R(\theta_0)-\R(\theta_n)}{\tilde{s}+1}\right)+(\tilde{s}+1)\sqrt{\dfrac{\Delta}{\lambda \displaystyle{\min_{0\leq k \leq n-1}}\eta_k}}, 
	\end{equation*}
	where $s$ is the number of critical values of $\R$ on $K$. \\
	More specifically, there are constants $c,\delta>0$ and $0<\alpha\leq \frac{1}{2}$ as well as multiplicative constants $C_1,C_2,C_3>0$ such that:
	\begin{equation*}
		\sum_{k=0}^{n-1}\|\theta_{k+1}-\theta_k\| \leq \frac{\tilde{s}+1}{d\lambda} 
		\left\{
		\begin{array}{ll}
			C_1 \left(\dfrac{\Delta}{2s}\right)^{\alpha} \text{ if } \dfrac{\Delta}{2s}\leq \delta \\
			C_2\left(\dfrac{\Delta}{2s}-\delta\right)+C_3\text{ else }
		\end{array}
		\right.
		+(\tilde{s}+1)\sqrt{\dfrac{\Delta}{\lambda \displaystyle{\min_{0\leq k \leq n-1}}\eta_k}}.
	\end{equation*}
	\label{th_longeur}
\end{theorem}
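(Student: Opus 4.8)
\textit{Proof proposal.} The plan is to cut the value sequence $\R(\theta_0)>\R(\theta_1)>\dots>\R(\theta_n)$ — strictly decreasing since EIA satisfies \eqref{classical_Armijo} — into consecutive index blocks, each block living in a range of $\R$-values free of the special values of assumption (a) (the critical values $\R_i$ and the successive means $(\R_i+\R_{i+1})/2$, at most $2s-1$ of them in total), and then to apply \eqref{inegalite_troncon} block by block.

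First I would build the partition. Let $v_1>v_2>\dots>v_{\tilde{s}}$ list the special values lying strictly between $\R(\theta_n)$ and $\R(\theta_0)$, and set $v_0=\R(\theta_0)$, $v_{\tilde{s}+1}=\R(\theta_n)$. For $i\in\{1,\dots,\tilde{s}\}$ let $k_i$ be the first index with $\R(\theta_{k_i})\leq v_i$; then $1\leq k_1\leq\dots\leq k_{\tilde{s}}\leq n$, and the blocks $\{0,\dots,k_1-1\}$, $\{k_i,\dots,k_{i+1}-1\}$ for $i=1,\dots,\tilde{s}-1$, and $\{k_{\tilde{s}},\dots,n-1\}$ are consecutive and tile $\{0,\dots,n-1\}$. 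After discarding those that are empty (which happens exactly when the descent skips a whole gap between two consecutive special values in a single step) and relabeling, I obtain nonempty blocks $B_j=\{p_j,\dots,q_j\}$ with $p_0=0$, $q_{\tilde{s}}=n-1$, $p_{j+1}=q_j+1$, their number being at most $2s$ (so $\tilde{s}\leq 2s-1$, as claimed), and such that every iterate of $B_j$ has its value in $(v_{j+1},v_j]$, where $(v_{j+1},v_j)$ contains no special value.

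Next I would bound each block increment using the estimates already established, distinguishing whether or not $\R(\theta_{p_j})$ is a special value: if it is not, $[\R(\theta_{q_j}),\R(\theta_{p_j})]$ satisfies assumption (a) and Lemma \ref{lemma_sum_phi1} applies; if it is, $[\R(\theta_{q_j}),\R(\theta_{p_j})[$ satisfies (a) in the semi-open sense of Lemma \ref{lemma_sum_phi2}, whose hypothesis $\min_{p_j\leq k\leq q_j-1}\|\nR(\theta_k)\|>\epsilon$ is part of the standing assumption (a block reduced to a single index contributes an empty left-hand sum there, so its increment is absorbed by the truncation term). In either case \eqref{inegalite_troncon} gives
\begin{equation*}
	\sum_{k=p_j}^{q_j}\|\theta_{k+1}-\theta_k\|\leq\frac{1}{d\lambda}\,\phi\left(\R(\theta_{p_j})-\R(\theta_{q_j})\right)+\sqrt{\frac{\Delta}{\lambda\,\eta_{q_j}}}.
\end{equation*}
Summing over $j=0,\dots,\tilde{s}$: the left-hand sides add up to $\sum_{k=0}^{n-1}\|\theta_{k+1}-\theta_k\|$; each $\eta_{q_j}\geq\min_{0\leq k\leq n-1}\eta_k$, which produces the term $(\tilde{s}+1)\sqrt{\Delta/(\lambda\min_{0\leq k\leq n-1}\eta_k)}$; and, with $a_j=\R(\theta_{p_j})-\R(\theta_{q_j})\geq 0$, the telescoping bound $\sum_j a_j\leq\R(\theta_0)-\R(\theta_n)$ (using $\R(\theta_{q_j})\geq\R(\theta_{q_j+1})=\R(\theta_{p_{j+1}})$), Jensen's inequality for the concave function $\phi$ (with $\phi(0)=0$), and the monotonicity of $\phi$ yield $\sum_j\phi(a_j)\leq(\tilde{s}+1)\,\phi\left((\R(\theta_0)-\R(\theta_n))/(\tilde{s}+1)\right)$. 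This gives the first bound, and the second follows by substituting the explicit majorant \eqref{majoration_phi} of $\phi$ and using $\R(\theta_0)-\R(\theta_n)\leq\Delta$ together with $\tilde{s}+1\leq 2s$.

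The step I expect to be the real obstacle is the combinatorial bookkeeping of the partition: one must arrange the blocks so that $\R(\theta_{p_j})$ is the only value allowed to coincide with a special value — matching precisely the hypotheses of Lemmas \ref{lemma_sum_phi1} and \ref{lemma_sum_phi2} — while handling the degenerate cases where an iterate lands exactly on a mean $(\R_i+\R_{i+1})/2$ or the descent jumps past an entire gap in one step, and verifying that the number of nonempty blocks is at most $2s$. Once the partition is laid out correctly, the remaining analytic work (Jensen, the telescoping estimate on $\sum_j a_j$, and the substitution of the explicit $\phi$) is routine.
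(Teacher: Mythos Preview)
Your proposal is correct and follows essentially the same route as the paper's proof: partition the index range so that each block lives in a value interval free of the special values of assumption (a), apply \eqref{inegalite_troncon} on each block, then combine via Jensen's inequality for the concave $\phi$ and the telescoping of the value drops. Your treatment of the partition (the indices $k_i$, the handling of empty blocks and of the case $\R(\theta_{p_j})$ equal to a special value via Lemma~\ref{lemma_sum_phi2}) is in fact more explicit than the paper's, which simply asserts the decomposition and invokes \eqref{inegalite_troncon}.
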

{\it Proof}
	We split up $[\R(\theta_n),\R(\theta_0)[$ into $\tilde{s}$ pieces satisfying each the assumptions of lemma \ref{lemma_sum_phi2}:
	\begin{equation*}
		[\R(\theta_n),\R(\theta_0)[ = [\R(\theta_{n_{\tilde{s}+1}}),\R(\theta_{n_{\tilde{s}}})[ \cup \dots [\R(\theta_{n_2}),\R(\theta_{n_1})[ \cup [\R(\theta_{n_1}),\R(\theta_{n_0})[,
	\end{equation*}
	where $n_0\coloneq 0$ and $n_{\tilde{s}+1}\coloneq n$ for convenience. Note that $\tilde{s}\leq 2s-1$ since we locate the values of the iterates in intervals of the form $]\R_i,(\R_i+\R_{i+1})/2[$ or $](\R_i+\R_{i+1})/2,\R_{i+1}[$ for $i\in \{1,\dots,s-1\}$.  Let $d<1$. Let us split up the length of the sequence of iterates to apply inequality \eqref{inegalite_troncon}:
	\begin{equation*}
		\sum_{k=0}^{n-1} \|\theta_{k+1}-\theta_k\| = \sum_{j=0}^{\tilde{s}} \sum_{k=n_j}^{n_{j+1}} \|\theta_{k+1}-\theta_k\| \leq \sum_{j=0}^{\tilde{s}}\left[ \frac{1}{d\lambda}\phi\left(\R(\theta_{n_j})-\R(\theta_{n_{j+1}})\right) + \sqrt{\dfrac{\Delta}{\lambda \eta_{n_j}}}\right].
	\end{equation*}
	By using Jensen inequality on $\phi$ and by lower bounding the time steps (for the second term), we get that the previous sum is upper bounded by:
	\begin{equation*}
		\dfrac{\tilde{s}+1}{d\lambda}\phi\left(\frac{1}{\tilde{s}+1}\sum_{j=0}^{\tilde{s}}\R(\theta_{n_j})-\R(\theta_{n_{j+1}})\right)+(\tilde{s}+1)\sqrt{\dfrac{\Delta}{\lambda \displaystyle{\min_{0\leq k \leq n-1}}\eta_k}}.
	\end{equation*}
	The first result arises from the computation of the telescopic sum appearing in $\phi$. The second one comes from the bound $\eqref{majoration_phi}$ and the fact that $\R(\theta_0)-\R(\theta_n) \leq \Delta$ ($\phi$ is increasing).
\qed

\begin{remark}
	Let us note that in theorem \ref{th_longeur}, $s$ could be taken as the number of critical values in $K_0 \subset \left\{\theta \in \Rb^N, \R(\theta) \leq \R(\theta_0)\right\}$. Indeed, the addition of the ball $B(0,1)$ to $K_0$ is arbitrary and we could add a ball of radius arbitrary small in order to not add critical values to $K$ compared to $K_0$. \\
	The result has been established for analytic functions but in fact two structures are sufficient to come up with such a bound: a local Kurdyka-Lojasiewicz inequality and a finite number of critical values on all compact set.
\end{remark}

\begin{corollary}
	Let $\epsilon>0$ and $d<1$. If $n$ satisfies:
	\begin{equation*}
		n \geq \dfrac{1}{\epsilon} \left[\dfrac{\tilde{s}+1}{d\lambda\eta^*}\phi\left(\dfrac{\Delta}{\tilde{s}+1}\right)+(\tilde{s}+1)\sqrt{\dfrac{\Delta}{\lambda}}(\eta^*)^{-3/2}\right],
	\end{equation*}
	then we have $\displaystyle{\min_{0\leq k \leq n}} \|\nR(\theta_k)\| \leq \epsilon$.
	\label{corollaire_acceleration_epsilon}
\end{corollary}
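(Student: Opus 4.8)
The plan is to deduce the corollary from Theorem \ref{th_longeur} together with the uniform lower bound $\eta_n \geq \eta^*$ on the admissible time steps (established right after Lemma \ref{lemma_minore_ILC}), by the standard ``short path versus large gradient'' argument. First I would record that, since EIA performs the update $\theta_{k+1}=\theta_k-\eta_k\nR(\theta_k)$, each increment satisfies $\|\theta_{k+1}-\theta_k\| = \eta_k\|\nR(\theta_k)\|$.

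Then I would argue by contradiction: assume that $\min_{0\le k\le n-1}\|\nR(\theta_k)\| > \epsilon$. Since moreover $\eta_k \geq \eta^* > 0$ for every $k$, this gives the lower estimate $\sum_{k=0}^{n-1}\|\theta_{k+1}-\theta_k\| = \sum_{k=0}^{n-1}\eta_k\|\nR(\theta_k)\| > n\,\eta^*\epsilon$. On the other hand, the hypothesis $\min_{0\le k\le n-1}\|\nR(\theta_k)\| > \epsilon$ is exactly what Theorem \ref{th_longeur} requires, so it yields $\sum_{k=0}^{n-1}\|\theta_{k+1}-\theta_k\| \leq \frac{\tilde{s}+1}{d\lambda}\phi\!\left(\frac{\R(\theta_0)-\R(\theta_n)}{\tilde{s}+1}\right) + (\tilde{s}+1)\sqrt{\frac{\Delta}{\lambda\,\min_{0\le k\le n-1}\eta_k}}$, with the same $\tilde{s}\leq 2s-1$ produced there. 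Using that $\phi$ is nondecreasing (Lemma \ref{lemma_phi}), that $\R(\theta_0)-\R(\theta_n)\leq\Delta$ because $\R(\theta_n)\geq\R^*$, and once more that $\min_{0\le k\le n-1}\eta_k\geq\eta^*$, this upper bound is at most $\frac{\tilde{s}+1}{d\lambda}\phi\!\left(\frac{\Delta}{\tilde{s}+1}\right) + (\tilde{s}+1)\sqrt{\frac{\Delta}{\lambda}}\,(\eta^*)^{-1/2}$.

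Combining the two displays gives $n\,\eta^*\epsilon < \frac{\tilde{s}+1}{d\lambda}\phi\!\left(\frac{\Delta}{\tilde{s}+1}\right) + (\tilde{s}+1)\sqrt{\frac{\Delta}{\lambda}}\,(\eta^*)^{-1/2}$; dividing by $\eta^*\epsilon$ yields $n < \frac{1}{\epsilon}\Big[\frac{\tilde{s}+1}{d\lambda\eta^*}\phi\!\left(\frac{\Delta}{\tilde{s}+1}\right) + (\tilde{s}+1)\sqrt{\frac{\Delta}{\lambda}}\,(\eta^*)^{-3/2}\Big]$, which contradicts the assumption on $n$. Hence $\min_{0\le k\le n}\|\nR(\theta_k)\| \leq \epsilon$, as claimed; the degenerate case $\Delta=0$ is trivial since then $\theta_0$ is already a critical point.

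I do not expect any real obstacle here: the substantive work is entirely contained in Theorem \ref{th_longeur} and in the time-step lower bound. The only points needing a little care are keeping the constant $\tilde{s}$ consistent with the one delivered by Theorem \ref{th_longeur}, invoking the monotonicity of $\phi$ to replace $\R(\theta_0)-\R(\theta_n)$ by $\Delta$, and tracking the strict versus non-strict inequalities so that the threshold condition ``$n\geq\dots$'' genuinely forces $\|\nR(\theta_k)\|\leq\epsilon$ for some index $k\leq n$.
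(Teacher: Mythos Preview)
Your proposal is correct and follows essentially the same approach as the paper: both combine the path-length bound of Theorem~\ref{th_longeur} with the identity $\|\theta_{k+1}-\theta_k\|=\eta_k\|\nR(\theta_k)\|$ and the uniform lower bound $\eta_k\geq\eta^*$, the only cosmetic difference being that the paper bounds the first hitting time directly while you phrase it as a contradiction. Your write-up is in fact slightly more careful in making explicit the use of the monotonicity of $\phi$ to pass from $\R(\theta_0)-\R(\theta_n)$ to $\Delta$.
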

{\it Proof}
	Let us denote by $n$ the first iteration such that $\|\nR(\theta_n)\|\leq \epsilon$. Then for all $0\leq k \leq n-1$, $\|\nR(\theta_k)\|> \epsilon$. Since $\|\theta_{k+1}-\theta_k\| = \eta_k \|\nR(\theta_k)\|$, the theorem \ref{th_longeur} gives:
	\begin{equation*}
		\epsilon \sum_{k=0}^{n-1}\eta_k \leq \frac{\tilde{s}+1}{d\lambda} \phi\left(\dfrac{\Delta}{\tilde{s}+1}\right)+(\tilde{s}+1)\sqrt{\dfrac{\Delta}{\lambda \displaystyle{\min_{0\leq k \leq n-1}}\eta_k}}.
	\end{equation*}
	Given that for all $k\in \mathbb{N}$, $\eta_k \geq \eta^*$, we get:
	\begin{equation*}
		n \leq \dfrac{1}{\epsilon} \left[\dfrac{\tilde{s}+1}{d\lambda\eta^*}\phi\left(\dfrac{\Delta}{\tilde{s}+1}\right)+(\tilde{s}+1)\sqrt{\dfrac{\Delta}{\lambda}}(\eta^*)^{-3/2}\right].
	\end{equation*}
	The result is then proved.
\qed

\begin{remark}
	The previous corollary could be seen as an acceleration result given that we achieve a $\mathcal{O}(\epsilon^{-1})$ complexity contrary to the $\mathcal{O}(\epsilon^{-2})$ bound of GD on smooth functions. However, the term $(\eta^*)^{-3/2}$ may involve $L^{3/2}$ in the worst case: this is due to the rough way we deal with the transition terms, that is to say the $\R(\theta_{n_j})$. \\
	The corollary makes a second noteworthy improvement concerning the $\Delta$-dependence. If $\frac{\Delta}{\tilde{s}+1} \leq \delta$ then the dependence of the complexity on $\Delta$ involves only $\Delta^{\alpha}$ and $\sqrt{\Delta}$. For a Morse function, $\Delta^{\alpha}=\sqrt{\Delta}$. Let us recall that $\delta$ measures both the size of the attractive regions and the gap between critical values. At the best of our knowledge, it is the first bifurcation phenomenon on $\Delta$ in the optimization litterature. \\
	For the first term (involving $\phi$), a bifurcation on $\tilde{s}$ appears since the dependence on $\tilde{s}$ is not linear anymore but it behaves like $\tilde{s}^{1-\alpha}$ if $\dfrac{\Delta}{\tilde{s}+1}\leq \delta$. However, the second term depends linearly on $\tilde{s}$: so it is relevant to better estimate the transition terms in future works.
\end{remark}

\begin{remark}
	Nesterov \cite{Nesterov_book} proves that the optimal complexity for first-order algorithms on smooth convex functions is around $\mathcal{O}(\epsilon^{-1})$. The function representing the worst case is the following (called Nesterov's worst function) for $N$ an odd number:
	\begin{equation*}
		\R(\theta) = \theta_1^2 + \sum_{k=1}^{N-1} \left[\theta_i-\theta_{i+1}\right]^2+\theta_N^2,
	\end{equation*}
	where $\theta_i$ denotes the $i$-th component of the vector  $\theta$. This function is analytic and also makes it possible to derive the optimal complexity for analytic functions. Therefore the dependence on $\epsilon^{-1}$ is optimal among analytic functions and it achieves by EIA.  
\end{remark}

\begin{corollary}
	The iterates of the EIA algorithm satisfy:
	\begin{equation*}
		\min_{0\leq k \leq n-1}\|\nR(\theta_k)\|=o(n^{-1}).
	\end{equation*}
	\label{corollaire_petito}
\end{corollary}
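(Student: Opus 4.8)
The plan is to derive the $o(n^{-1})$ rate from the \emph{finiteness of the length of the trajectory}, which Theorem~\ref{th_longeur} provides uniformly in $n$. First I would establish that $\sum_{k=0}^{+\infty}\|\theta_{k+1}-\theta_k\|<+\infty$. Fix $n\in\mathbb{N}$ and set $g_n\coloneq\min_{0\le k\le n-1}\|\nR(\theta_k)\|$. If $g_n>0$, then applying Theorem~\ref{th_longeur} with any $\epsilon\in\,]0,g_n[$, using $\R(\theta_0)-\R(\theta_n)\le\Delta$ and the monotonicity of $\phi$, and lower bounding $\min_{0\le k\le n-1}\eta_k\ge\eta^*$ (the uniform lower bound on the admissible time steps coming from Lemma~\ref{lemma_minore_ILC}), one obtains
\[
	\sum_{k=0}^{n-1}\|\theta_{k+1}-\theta_k\|\le\frac{\tilde s+1}{d\lambda}\,\phi\!\left(\frac{\Delta}{\tilde s+1}\right)+(\tilde s+1)\sqrt{\frac{\Delta}{\lambda\eta^*}}\le\ell_0,
\]
where $\ell_0$ is a finite constant that does not depend on $n$ nor on $\epsilon$ (recall $\tilde s\le 2s-1$, with $s$ the fixed number of critical values of $\R$ on $K$). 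If instead $g_n=0$, some iterate $\theta_{k_0}$ with $k_0\le n-1$ is critical, hence $\theta_k=\theta_{k_0}$ for all $k\ge k_0$, so the partial sum reduces to the one up to index $k_0$ and is again at most $\ell_0$. Since the partial sums are nondecreasing in $n$, the series converges with sum at most $\ell_0$.

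Next, since every admissible step satisfies $\eta_k\ge\eta^*>0$ and $\|\theta_{k+1}-\theta_k\|=\eta_k\|\nR(\theta_k)\|$, I would deduce
\[
	\sum_{k=0}^{+\infty}\|\nR(\theta_k)\|\le\frac{1}{\eta^*}\sum_{k=0}^{+\infty}\|\theta_{k+1}-\theta_k\|\le\frac{\ell_0}{\eta^*}<+\infty .
\]
Then I would invoke the elementary fact that a nonnegative summable sequence $(a_k)$ satisfies $\min_{0\le k\le m-1}a_k=o(m^{-1})$: for $m\ge 2$ set $p\coloneq\lfloor m/2\rfloor\ge 1$, so that
\[
	m\min_{0\le k\le m-1}a_k\le m\min_{p\le k\le m-1}a_k\le\frac{m}{m-p}\sum_{k=p}^{m-1}a_k\le 2\sum_{k\ge p}a_k\xrightarrow[m\to+\infty]{}0,
\]
using $m-p=\lceil m/2\rceil\ge m/2$ and the fact that the tail of a convergent series vanishes (note $p\to+\infty$). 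Applying this with $a_k=\|\nR(\theta_k)\|$ yields $\min_{0\le k\le n-1}\|\nR(\theta_k)\|=o(n^{-1})$.

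The argument is largely mechanical once Theorem~\ref{th_longeur} is in hand; the only points I expect to require some care are the bookkeeping in the degenerate case $g_n=0$ (an iterate landing exactly on a critical point, after which the scheme freezes) and the observation that the right-hand side of Theorem~\ref{th_longeur}, once the time steps are bounded below by $\eta^*$, does not depend on $\epsilon$ — this is exactly what makes the bound usable with, say, $\epsilon=g_n/2$ for every $n$ with $g_n>0$, and hence what lets us conclude that the trajectory has finite length. I do not foresee any substantial obstacle beyond that.
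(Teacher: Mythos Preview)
Your proof is correct and follows essentially the same approach as the paper: both deduce from Theorem~\ref{th_longeur} (together with the lower bound $\eta_k\ge\eta^*$) that $\sum_k\|\nR(\theta_k)\|<+\infty$, and then use the standard ``tail of a convergent series'' trick to upgrade summability to $\min_{0\le k\le n-1}\|\nR(\theta_k)\|=o(n^{-1})$. Your write-up is in fact slightly more careful than the paper's on two points it leaves implicit: the choice of $\epsilon$ needed to invoke Theorem~\ref{th_longeur} for each $n$, and the degenerate case $g_n=0$.
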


\begin{proof}
	Let us define for $n\in \mathbb{N}$, $g_n = \displaystyle{\min_{0\leq k \leq n-1}\|\nR(\theta_k)\|}$. Since $(g_n)_{n\in \mathbb{N}}$ is a positive decreasing sequence it is clear that:
	\begin{equation*}
		(n/2+1)g_n \leq \left(n- \lfloor n/2 \rfloor +1\right)g_n \leq \sum_{\lfloor n/2 \rfloor}^n g_k \leq \sum_{\lfloor n/2 \rfloor}^{+\infty} g_k.
	\end{equation*}
	Therefore we have:
	\begin{equation*}
		g_n \leq \dfrac{2}{n+2}\sum_{k=\lfloor n/2 \rfloor}^{+\infty}g_k.
	\end{equation*}
	To get the result, it is sufficient that the remainder tends to $0$ when $n\to \infty$. According to theorem \ref{th_longeur}, we get:
	\begin{equation*}
		\eta^* \sum_{k=0}^{n-1}g_k \leq \frac{\tilde{s}+1}{d\lambda} \phi\left(\dfrac{\Delta}{\tilde{s}+1}\right)+(\tilde{s}+1)\sqrt{\dfrac{\Delta}{\lambda \eta^*}}.
	\end{equation*}
	Then the sum of $(g_k)_{k\in \mathbb{N}}$ converges and then the remainder tends to $0$. 
\end{proof}

\begin{remark}
	This result can be put in touch with the one of \cite{Nesterov_amelioration} where it is shown that the algorithm Nesterov \it Forward-Backward optimizer satisfies: $\|\theta_{n+1}-\theta_n\|=o(n^{-1})$ and $\R(\theta_n)-\R^* = o(n^{-2})$. \\
	As recalled in the introduction it can also be in touch with proposition 8 of \cite{Josz} but EIA does not need any tuning.  
\end{remark}

\section{Conclusion}

To conclude, let us sum up the main contributions of the paper:
\begin{itemize}
	\item first we prove that the memory backtracking Armijo (implemented in \cite{Bilel_thesis} on machine learning tasks) achieves the same acceleration than clipping GD compared to GD. In addition, it achieves optimal complexity under the generalized smoothness assumption and improves the complexity of clipping GD even in terms of gradient/function evaluations.
	\item We establish an acceleration complexity of $\mathcal{O}(\epsilon^{-1})$ for analytic functions compared to the $\mathcal{O}(\epsilon^{-2})$ for smooth functions. The algorithm EIA achieves the speed $o(n^{-1})$ of \cite{Nesterov_amelioration,Josz} but without any hyperparameter tuning. A very important point is the bifurcation on $\Delta$ that involves the improvement in $\sqrt{\Delta}$, that depends on the number of critical values and the size of the basins of attraction. 
\end{itemize}
These results show theoretically that it will be interesting to explore the applicability of Armijo-like algorithms in Deep Learning. 

\section{Declarations}

\paragraph{Funding}
No funding was received for conducting this study.

\paragraph{Competing interests}
The authors have no competing interests to declare that are relevant to the content of this article.

\newpage

\bibliographystyle{spmpsci}
\bibliography{biblio}

\end{document}